\documentclass[12pt]{amsart}

\usepackage{amsmath,amssymb,amsthm,hyperref,cite,color, booktabs, geometry, tikz-cd, mathtools, cleveref, verbatim}

\newtheorem{Theorem}{Theorem}[section]

\newtheorem{Lemma}[Theorem]{Lemma}

\theoremstyle{definition}
\newtheorem{Definition}[Theorem]{Definition}
\newtheorem{Example}[Theorem]{Example}

\newcommand\Res{\mathrm{Res}}
\renewcommand\char{\mathrm{char}}
\newcommand\Br{\mathrm{Br}}
\newcommand\N{\mathrm{N}}
\newcommand\C{\mathrm{C}}
\newcommand\Ind{\mathrm{Ind}}

\renewcommand\ker{\mathrm{ker}}
\newcommand\Hom{\mathrm{Hom}}

\newcommand\soc{\mathrm{soc}}
\renewcommand\top{\mathrm{top}}
\newcommand\vx{\mathrm{vx}}
\renewcommand\mod{\mathrm{mod}}
\newcommand\PSL{\mathrm{PSL}}
\newcommand\SL{\mathrm{SL}}
\newcommand\Syl{\mathrm{Syl}}
\renewcommand\det{\mathrm{det}}

\newcommand\End{\mathrm{End}}
\newcommand\Sl{\mathrm{Sl}}
\newcommand\rad{\mathrm{rad}}
\newcommand\Decom{\mathrm{Decom}}

\title{Slash indecomposability of Endopermutation Scott modules}

\date{}

\author{Lin Wu}
\address{Shenzhen International Center for Mathematics and Department of Mathematics, Southern University of Science and Technology, Shenzhen 518055, China}
\email{12431015@mail.sustech.edu.cn}

\thanks{The author gratefully acknowledges financial support by NSFC (12622101).}

\begin{document}

\maketitle

\begin{abstract}
This paper gives a criterion for an endopermutation Scott module to be slash indecomposable. In particular, under suitable assumptions, we only need to prove certain endopermutation Scott modules over local subgroups are slash indecomposable. 
\end{abstract}

\section{Introduction}

Let $G$ be a finite  group and let $p$ be a prime number. Let $k$ be an algebraically closed field with $\char(k)= p$. The Brauer functor is a fundamental tool for studying p-permutation modules (for background on the Brauer functor and $p$-permutation modules, the reader is referred to \cite[Section 5.4, Section 5.11]{MR3821516}). In 2014, Biland \cite{MR3227305} generalized the notions of Brauer functor and $p$-permutation module to slash functor and Brauer-friendly module, respectively, and then used these new notions in his work \cite{MR3197652} on the gluing problem.

In 2011, Kessar, Kunugi and Mitsuhashi \cite{MR2813564} defined Brauer indecomposability. They gave this definition for the purpose of studying gluing processes, which is a method for studying categorical equivalence between $p$-blocks of finite groups (see \cite{MR2078933, MR2125461}). Brou\'e's gluing method \cite[Theorem 6.3]{MR1308978} gives us a sufficient condition for the Scott module $S(G\times H, \Delta P)$ to induce a stable equivalence of Morita type between two principal blocks. If this is the case, we always have that $S(G\times H, \Delta P)$ is Brauer indecomposable, and no exceptions are known so far. For reference, one could see Koshitani and  Lassueur's works \cite{MR4050079, MR4102107, MR4335855}. So it is important to know whether the Scott module is Brauer indecomposable or not, and this problem has been studied in several papers   \cite{MR3565442, MR2813564, wu2026kernelscottmodulesbrauer}

By \cite{MR3227305}, we know slash functor shares many useful properties with Brauer functors. In 2018, Feng and Li \cite{MR3789018} defined endopermutation Scott module and slash indecomposability. Through these two new notions, they generalized the notions of Scott module and Brauer indecomposability in terms of the slash functor.

There is a criterion for a Scott module to be Brauer indecomposable in \cite[Theorem 1.2]{MR2813564}. 

\begin{Theorem}[Kessar--Kunugi--Mitsuhashi]
Let $P$ be an abelian $p$-subgroup of a finite group $G$. If $\mathcal{F}_P(G)$ is a saturated fusion system then $S(G, P)$ is Brauer indecomposable.
\end{Theorem}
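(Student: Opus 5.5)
We need to show that for every subgroup $Q \le P$ the Brauer quotient $S(G,P)(Q)$ is indecomposable as a $k\C_G(Q)$-module. The plan is to reduce this to a statement about the local subgroup $\N_G(Q)$ and then use abelianness of $P$ together with saturation.

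\textbf{Step 1: the local Scott module.} Set $M = S(G,P)$ and fix $Q \le P$. Since $M$ is a $p$-permutation module and a direct summand of $\Ind_P^G k$, the Brauer construction $M(Q)$ is a $p$-permutation $k\N_G(Q)$-module. It is a summand of $(\Ind_P^G k)(Q)$, and this is the permutation module on the $Q$-fixed cosets of $G/P$. Using Green correspondence and Burry--Carlson--Puig type arguments, I will show that $M(Q)$ contains $S(\N_G(Q), \N_P(Q))$ as a direct summand. Here $P$ abelian gives $\N_P(Q)=P$, so this summand is $S(\N_G(Q),P)$.

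\textbf{Step 2: $M(Q)$ is indecomposable as an $\N_G(Q)$-module.} The main obstacle is to show that $M(Q)$ has no further summands. I would first replace $Q$ by a fully normalized $\mathcal{F}_P(G)$-conjugate, which is harmless because Brauer quotients transport along conjugation. Then I would use the saturation axioms, namely the Sylow property and the extension axiom, to show $P \in \Syl_p(\C_G(Q))$. Because $P$ is abelian, the extension axiom implies every $G$-conjugation between subgroups of $P$ containing $Q$ extends, so fusion of $P$ is controlled appropriately. Next, I would compute the vertices and multiplicities of the summands of $M(Q)$ via the Brauer quotients $M(R)$ for $Q\le R\le P$. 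Using that $\dim M(P)$, or rather its trivial-source summand structure, equals one copy of $S(\N_G(P),P)$, I would argue by induction downward on $|Q|$ that only one summand has vertex containing $Q$ with nonzero Brauer quotient. If this direct approach is messy, I would instead quote the criterion that $M$ is Brauer indecomposable iff $\Res^{\N_G(Q)}_{Q\C_G(Q)} S(\N_G(Q),\N_P(Q))$ is indecomposable and $M(Q)\cong S(\N_G(Q),\N_P(Q))$. The equality part is what saturation delivers, through a counting of $P$-orbits on $(G/P)^Q$ matched against $\N_G(Q)$-double cosets.

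\textbf{Step 3: restriction to $\C_G(Q)$.} Finally, I need $\Res^{\N_G(Q)}_{\C_G(Q)} S(\N_G(Q),P)$ to be indecomposable. Since $P \le \C_G(Q)$ is Sylow in $\C_G(Q)$ by Step 2, $S(\C_G(Q),P)$ is the projective cover of the trivial module twisted appropriately, and restriction of the Scott module from $\N_G(Q)$ to a normal subgroup $\C_G(Q)$ containing the vertex $P$ remains indecomposable. Concretely, I would show this restriction is a Scott module with vertex $P$ by Mackey and Green's indecomposability theorem, using that $\N_G(Q)/\C_G(Q)$ acts on it compatibly and that $P\C_G(Q)=\C_G(Q)$. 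This completes the argument. Most of the difficulty lies in Step 2, where saturation must be converted into the statement that the Brauer quotient contains no extra summands.
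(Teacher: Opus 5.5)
The paper does not prove this statement; it is quoted from Kessar--Kunugi--Mitsuhashi. So I judge your proposal on its own terms. It has a genuine gap, located exactly in the step that is supposed to use the hypotheses.

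First, saturation does not give $P\in\Syl_p(\C_G(Q))$. For $Q=1$ this would say $P\in\Syl_p(G)$. For $G=C_p\times C_p$ and $P=C_p\times 1$, the system $\mathcal{F}_P(G)=\mathcal{F}_P(P)$ is saturated, yet $P$ is Sylow in no $\C_G(Q)$. Moreover, if $P$ were Sylow in $\C_G(Q)$, then $S(\C_G(Q),P)$ would be the trivial module, not a projective cover. If $P\in\Syl_p(G)$ the theorem is trivial, since $S(G,P)=k$.

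Second, the principle behind Step 3 is false: restricting a Scott module to a normal subgroup containing its vertex need not keep it indecomposable. Green's indecomposability theorem concerns \emph{induction} from normal subgroups of $p$-power index, so it does not apply. For example, let $p=3$, $C=\langle a\rangle\times\langle b\rangle\cong C_3\times C_3$, $N=C\rtimes\langle t\rangle$ with $t$ an involution swapping $a$ and $b$, and $P=\langle a\rangle$. Then $S(N,P)=\Ind_P^N k$ and $\Res^N_C S(N,P)\cong S(C,\langle a\rangle)\oplus S(C,\langle b\rangle)$. This happens even though $C\trianglelefteq N$ contains $P$ and $|N:C|$ is prime to $p$. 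So Step 3 is unproved. The downward induction you sketch in Step 2 (essentially the Ishioka--Kunugi argument) also needs indecomposability of $\Res_{\C_G(R)}M(R)$ for $Q<R\le P$ as input, so it rests on the same missing step.

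The missing idea is what saturation actually yields when $P$ is abelian.
\begin{itemize}
\item Every $Q\le P$ is fully normalized with $\mathrm{Aut}_P(Q)=1$. The Sylow axiom therefore forces $\N_G(Q)/\C_G(Q)$ to be a $p'$-group.
\item For $\varphi\in\mathrm{Aut}_G(Q)$, the group $N_\varphi$ of the extension axiom is all of $P$. So $\varphi$ is induced by an element of $\N_G(P)$, which gives the Frattini factorization $\N_G(Q)=\C_G(Q)\,\N_G(Q,P)$. This is precisely what fails in the example above, where $\N_N(P)=C$.
\end{itemize}
Now set $N=\N_G(Q)$ and $C=\C_G(Q)$.
\begin{itemize}
\item Mackey and the factorization give $\Res^N_C\Ind_P^N k\cong(\Ind_P^C k)^{\oplus|N:C|}$.
\item Since $|N:C|$ is prime to $p$, Clifford theory shows the indecomposable summands of $\Res^N_C S(N,P)$ are $N$-conjugate. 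One of them has nonzero $C$-fixed points, hence all do, hence all are isomorphic to $S(C,P)$. Write $\Res^N_C S(N,P)\cong S(C,P)^{\oplus m}$.
\item Apply the Brauer construction at $P$. The left side becomes the restriction of the projective cover of $k$ over $\N_N(P)/P$ to the normal subgroup $\N_C(P)/P$, which has $p'$-index $|N:C|$. That restriction is again the projective cover of $k$, because restriction to a normal $p'$-index subgroup commutes with taking radicals. This is $S(C,P)(P)$, so $m=1$.
\end{itemize}
For abelian $P$ one has $\N_P(Q)=P$ and $Q\C_G(Q)=\C_G(Q)$. Combined with the Ishioka--Kunugi criterion quoted in the introduction, this proves the theorem, and Steps 1--2 become unnecessary.
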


Feng and Li \cite[Theorem 5.5]{MR3789018} generalized this theorem.

\begin{Theorem}[Feng--Li]\label{Th1.2}
Suppose that $P$ is an abelian $p$-subgroup of a finite group $G$ and $M$ is the endopermutation Scott $kG$-module with respect to the fusion stable endopermutation source pair $(P, V)$. If $\mathcal{F}_P(G)$ is a saturated fusion system, then $M$ is slash indecomposable.
\end{Theorem}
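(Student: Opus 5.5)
The plan is to follow the Kessar--Kunugi--Mitsuhashi argument, with the Brauer construction replaced by Biland's slash construction. Write $M$ for the endopermutation Scott module attached to $(P,V)$. Slash indecomposability asks that for every subgroup $Q\le P$ (up to conjugacy, the vertex subgroups that matter), the slashed module $M[Q]$ restricted to $C_G(Q)$ be indecomposable or zero.

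First I would record the basic properties of $M[Q]$, which parallel those of the Brauer quotient by Biland's results and by Feng--Li's construction.
\begin{itemize}
\item $M[Q]$ is a Brauer-friendly $kN_G(Q)$-module.
\item $M[Q]$ is nonzero exactly when $Q$ is contained in a vertex, so we may take $Q\le P$.
\item $M[Q]$ is a direct summand of $\Ind_{N_P(Q)}^{N_G(Q)}(V[Q])$, possibly slashed from a larger induced module.
\item $M[Q]$ contains, as an $N_G(Q)$-summand, the endopermutation Scott module of $N_G(Q)$ attached to $(N_P(Q),V[Q])$.
\end{itemize}
Here fusion stability of $V$ guarantees that $V[Q]$ is an indecomposable endopermutation module compatible with fusion. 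Since $P$ is abelian, $N_P(Q)=C_P(Q)=P$, so $M[Q]$ has a summand $S_Q$, the endopermutation Scott module of $N_G(Q)$ with respect to $(P,V[Q])$.

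Second, I would reduce to the restriction to $C_G(Q)$. Because $P\le C_G(Q)$ and the fusion system is saturated, a Frattini-type argument should show that $C_G(Q)$ has $p'$-index-related control. More precisely, saturation (fully normalized equals fully centralized, together with the extension axiom) gives $N_G(Q)=C_G(Q)N_{N_G(Q)}(P)$, and $N_G(P)$ controls the relevant fusion. From this, Mackey decomposition shows that $\Res_{C_G(Q)} S_Q$ is indecomposable: it is the endopermutation Scott $kC_G(Q)$-module with respect to $(P,V[Q])$. The key point is that $V[Q]$ extends to $N_G(Q)$-stable data, and a Scott-type module has a unique summand with vertex $P$ and the trivial-source-like top.

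Third, and this is the main obstacle, I must show $M[Q]=S_Q$, with no further summands. Following KKM, I would compare dimensions or use the Green correspondence. Every indecomposable summand of $\Res_{C_G(Q)}M[Q]$ has a vertex $R$ with $Q\le R\le P$ and is slash-friendly. Any extra summand $X$ with vertex $R$ would give $X[R]\neq 0$, hence a nonzero summand of $M[R]$ not accounted for by the Scott part. I would therefore argue by downward induction on $|Q|$, starting at $Q=P$. At $Q=P$, $M[P]$ restricted to $C_G(P)$ is $V[P]$ inflated, which is indecomposable because $M[P]$ is the Green correspondent data over $N_G(P)$ with $N_G(P)/C_G(P)$ a $p'$-group acting. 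The inductive step uses the transitivity $M[Q][R]\cong M[R]$ (Biland), combined with saturation to conjugate $R$ into a fully centralized position inside $C_G(Q)$. Together these show that the Scott summand already exhausts $M[Q][R]$, so no extra summand $X$ exists.

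The delicate part is verifying the Frattini factorization and the fusion-stability of $V[Q]$ in the slash setting. If the direct route fails, I would instead invoke Feng--Li's transitivity and the criterion that $M[Q]$ is indecomposable whenever $C_G(Q)$-fusion on $P$ is controlled.
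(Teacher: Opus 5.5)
The paper does not prove this statement; it quotes it from Feng--Li. The nearest argument in the paper is the Claim in the proof of Theorem~\ref{Th3.3}. For abelian $P$ every subgroup is normal in $P$ and fully normalized. Your Step~3 is then that Claim: induction on $|P|/|Q|$, slash an extra summand at its vertex $R$, and use Lemma~\ref{Lem2.6} to find two nonzero pieces in the indecomposable $\Res_{\C_G(R)}\Sl_{G,R}(M)$. Your Step~2 stands in for the hypothesis that $M_Q$ is slash indecomposable.

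So the architecture is sound, but the fact that does the work is only gestured at. Since $\mathrm{Aut}_P(Q)=1$, the Sylow axiom gives $p\nmid[\N_G(Q):\C_G(Q)]$ for every $Q\le P$. Without this, your base case and Step~2 do not hold up as written.

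\emph{Base case.} $\Sl_{G,P}(M)$ is not an inflation of $V[P]$; that is a module for $P/P$, so its inflation is trivial. By Definition~\ref{Def2.8}, $\Sl_{G,P}(M)$ is the projective cover of $k$ for $\N_G(P)/P$. Its restriction to $\C_G(P)/P$ is indecomposable because a projective cover of $k$ restricts to a projective cover of $k$ on a normal subgroup of $p'$-index. This is the case $Q=P$ in the paper, via Feng--Li's Lemma~4.8.

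\emph{Step~2.} The factorization $\N_G(Q)=\C_G(Q)\N_{\N_G(Q)}(P)$ plus Mackey cannot give indecomposability. For example, take $N=C\times K$ with $P\le C$ and $K$ a nontrivial $p$-group: then $N=C\,\N_N(P)$, yet $\Res_C S(N,P)\cong S(C,P)^{\oplus|K|}$. What works is the following.
By the $p'$-index and the factorization, every summand of $\Res_{\C_G(Q)}S_Q$ has vertex $\C_G(Q)$-conjugate to $P$. Fusion stability makes $V$ $\N_G(P)$-stable, so each such summand has source $V[Q]$. By Lemma~\ref{Lem2.9} and the bijection lemma before Definition~\ref{Def2.8}, these summands correspond to those of $\Res_{\N_{\C_G(Q)}(P)}\Sl_P(S_Q)$. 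That module is a projective cover of $k$ restricted to a normal subgroup of $p'$-index, hence indecomposable.

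The genuine gap is in Step~3: the induction only rules out an extra summand $X$ whose vertex $R$ properly contains $Q$. If $R=Q$, i.e.\ $X$ is projective as a $k[\C_G(Q)/Q]$-module, slashing at $R$ returns $\Sl_{G,Q}(M)$ itself up to a twist. No induction hypothesis applies, and ``a summand not accounted for by the Scott part'' is then no contradiction. You need the separate argument the paper uses for $R=Q$. By the $p'$-index, $X$ is a summand of the restriction of an indecomposable summand $Y$ of $\Sl_{G,Q}(M)$ with vertex $Q$. Lemma~\ref{Lem2.5}(4) then gives a summand of $M$ with vertex $Q$, contradicting $Q<P$.

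For $R>Q$ you also need $\Sl_R$ of the Scott part to be nonzero, i.e.\ $R\le{}^cP$ for some $c\in\C_G(Q)$. This is not about a ``fully centralized position'', which is automatic here. It follows from the extension axiom: $N_\varphi=P$ for every $\varphi\colon Q\to P$ in $\mathcal{F}_P(G)$, so every $G$-conjugate of $P$ containing $Q$ is $\C_G(Q)$-conjugate to $P$. Your factorization is the special case of this; the paper uses Ishioka--Kunugi's Lemma~3.2 at this point.

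Finally, $S_Q\mid M[Q]$ is not a formal property. Here it holds because $\Sl_P(\Sl_{G,Q}(M))$ is, up to twist, $\Res_{\N_G(Q,P)}\Sl_{G,P}(M)$, which contains the projective cover of $k$.
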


In 2017, Ishioka and Kunugi \cite[Theorem 1.3]{MR3565442} gave a new criterion for a Scott module to be Brauer indecomposable in terms of Scott modules over local subgroups.

\begin{Theorem}[Ishioka--Kunugi]
Let $G$ be a finite group and $P$ a $p$-subgroup of $G$. Suppose that $M= S(G, P)$ and that $\mathcal{F}_P(G)$ is saturated. TFAE:

(1) $M$ is Brauer indecomposable.

(2) $\Res_{Q\C_G(Q)}^{\N_G(Q)}S(\N_G(Q), \N_P(Q))$ is indecomposable for each fully normalized subgroup $Q\leq P$.
\end{Theorem}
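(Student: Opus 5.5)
We prove the two implications separately, working with the standard facts on Scott modules: $M=S(G,P)$ has vertex $P$ and is a $p$-permutation module; for $Q\le P$ the Brauer quotient $M(Q)$ is a $p$-permutation $k\N_G(Q)$-module; and by Burry--Carlson--Puig (equivalently, the Green correspondence for Scott modules) $S(\N_G(Q),\N_P(Q))$ is a direct summand of $M(Q)$ whenever $Q$ is fully normalized. Here $\N_P(Q)$ is a Sylow $p$-subgroup of $\N_G(Q)$ relative to the relevant fusion, and this is where saturation of $\mathcal{F}_P(G)$ enters.

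\textbf{(1)$\Rightarrow$(2).} Suppose $\Res^{\N_G(Q)}_{Q\C_G(Q)}M(Q)$ is indecomposable. Since $S(\N_G(Q),\N_P(Q))$ is a nonzero direct summand of $M(Q)$, its restriction to $Q\C_G(Q)$ is a nonzero direct summand of an indecomposable module. Hence it is indecomposable, and (2) follows.

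\textbf{(2)$\Rightarrow$(1).} The key step is to show that for fully normalized $Q\le P$,
$$M(Q)\cong S(\N_G(Q),\N_P(Q)).$$
The indecomposable summands of the $p$-permutation module $M(Q)$ are trivial-source $k\N_G(Q)$-modules with vertices containing $Q$. We argue as follows.
\begin{enumerate}
\item Using $M\mid \Ind_P^G k$ and the Mackey formula for Brauer quotients, write $M(Q)$ as a summand of $\bigoplus_g \Ind^{\N_G(Q)}_{\N_{{}^gP}(Q)} k$, taken over those $g$ with $Q\le {}^gP$.
\item Saturation gives each such $\N_{{}^gP}(Q)$ as $\N_G(Q)$-conjugate to a subgroup of $\N_P(Q)$. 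Hence every summand of $M(Q)$ has vertex inside $\N_P(Q)$ up to conjugacy.
\item To pin down that there is only one summand, first compare dimensions of $M(Q)(R)$ for $Q\le R\le \N_P(Q)$. Then use the transitivity of Brauer quotients, $M(Q)(R)\cong M(R)$ for $Q\trianglelefteq R$, and induct downward on $|Q|$. The base case $Q=P$ holds because $M(P)$ is the Scott module of $\N_G(P)$ with vertex $P$, which is indecomposable; here (2) forces the restriction to $P\C_G(P)$ to be indecomposable.
\end{enumerate}

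\textbf{Reduction to fully normalized $Q$.} An arbitrary $Q\le P$ is $G$-conjugate to a fully normalized one, and Brauer indecomposability is stable under conjugation. So once $M(Q)\cong S(\N_G(Q),\N_P(Q))$ is established for fully normalized $Q$, (2) gives exactly that $\Res_{Q\C_G(Q)}M(Q)$ is indecomposable for every $Q\le P$. This is (1).

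\textbf{Main obstacle.} The hard step is showing $M(Q)$ has no summands other than $S(\N_G(Q),\N_P(Q))$. The Mackey bound only controls vertices, not multiplicities. I would try Ishioka--Kunugi's route first: the inductive argument on $|Q|$ via $M(Q)(R)\cong M(R)$. The hypothesis (2) at the larger subgroups $R$, restricted to $R\C_G(R)$, makes the top summand unique. A summand with smaller vertex would then contribute an extra component at some $R$, contradicting (2) there.
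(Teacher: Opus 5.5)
Your route is the paper's route: the proof of Theorem 3.3 is Ishioka--Kunugi's argument transcribed into the slash setting. You split off the local Scott module, use Mackey plus saturation to put the vertex $R$ of any other summand $L$ of $M(Q)$ inside $\N_P(Q)$ with $Q\le R$, and then use $M(Q)(R)\cong\Res_{\N_G(Q,R)}M(R)$. That module splits into the nonzero pieces $S(\N_G(Q),\N_P(Q))(R)$ and a part containing $L(R)$, contradicting the inductive hypothesis at $R$.

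\textbf{The gap.} This mechanism only detects extra summands whose vertex strictly contains $Q$. If $L$ has vertex exactly $Q$, i.e.\ $L$ is a projective $k[\N_G(Q)/Q]$-module, then $L(R)=0$ for every $R>Q$, so no larger subgroup sees it. At $R=Q$ the argument is circular. Your Mackey bound does not exclude this case: $\Ind^{\N_G(Q)}_{\N_{{}^gP}(Q)}k$ can have projective $\N_G(Q)/Q$-summands even though $\N_{{}^gP}(Q)>Q$. Already $k[S_3/C_2]$ in characteristic $2$ contains the $2$-dimensional projective simple module. The missing ingredient is Brou\'e's correspondence: if $M(Q)$ has a summand that is projective over $\N_G(Q)/Q$, then $M$ has an indecomposable summand with vertex $Q$. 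Since $M$ is indecomposable with vertex $P$, this forces $Q=P$, which is the base case. This is exactly the step ``If $R=Q$, then $P=Q$ by Lemma 2.5(4)'' in the paper's proof.

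\textbf{Further repairs.}
\begin{enumerate}
\item The vertex $R$ you obtain is usually not fully normalized. So the contradiction cannot come from (2) at $R$. It must come from the inductive statement ``$\Res_{R\C_G(R)}M(R)$ is indecomposable for every $R\le P$ with $|R|>|Q|$'', with the conjugation step carried inside the induction, as in the paper's Claim. You also need $R\C_G(R)\le\N_G(Q,R)$, which holds because $Q\lhd R\le\N_P(Q)$.
\item The ``compare dimensions'' step plays no role and can be dropped.
\item Your (1)$\Rightarrow$(2) rests entirely on $S(\N_G(Q),\N_P(Q))\mid M(Q)$, and this is not a consequence of Burry--Carlson--Puig or Green correspondence. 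In general $\N_P(Q)$ is not a Sylow $p$-subgroup of $\N_G(Q)$, and $\N_G(\N_P(Q))\not\le\N_G(Q)$. The statement is true, but it is a separate lemma. Its proof uses saturation through the same fact as your step 2: every $\N_{{}^gP}(Q)$ with $Q\le{}^gP$ is $\N_G(Q)$-conjugate into $\N_P(Q)$. One way to finish it is a relative trace argument at $\N_P(Q)$.
\end{enumerate}
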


In this paper, we prove an analogue of \cite[Theorem 1.3]{MR3565442} in terms of the slash functor, which is the following theorem, and this is also the main theorem of this paper. In fact, Watanabe \cite{MR4282221} also considered this problem. We focus on the endopermutation Scott module (see Definition \ref{Def2.8}). Meanwhile, Watanabe considerd a larger class of modules (see \cite[Theorem 5.1]{MR4282221}), but to the best of our knowledge, Watanabe's result does not cover the next theorem directly.

\begin{Theorem}\label{Th3.3}
Let $G$ be a finite group and $P$ a $p$-subgroup of $G$. Let $\mathcal{S}$ be the set of nontrivial fully normalized subgroups of $P$ in the fusion system $\mathcal{F}_P(G)$.  Assume $\mathcal{F}_P(G)$ is saturated. Suppose every fully normalized subgroup of $P$ is a normal subgroup of $P$. Let $(P, V)$ be a fusion-stable endopermutation source pair of $G$ and choose a Frobenius-friendly category $\mathcal{C}_G(P, V)$, a $P$-slash functor $\Sl_{G, P}: \mathcal{C}_G(P, V)\rightarrow \mod(k\N_G(P))$. Let $M$ be an endopermutation Scott module with respect to $(P, V)$ and $\Sl_{G, P}$. For any $Q\in\mathcal{S}$, there exists $t(Q)\in \N_G(P)$ satisfying:

\begin{itemize}
\item $(P, {^{t(Q)}V})$ is a fusion-stable endopermutation source pair of $\N_G(Q)$;
\item Fix a $\mathcal{C}_{\N_G(Q)}(P, {^{t(Q)}V})$, then there exists a $P$-slash functor \begin{equation*}
\Sl_{\N_G(Q), P}: \mathcal{C}_{\N_G(Q)}(P, {^{t(Q)}V})\rightarrow \mod(k\N_G(Q, P))
\end{equation*} 
such that the endopermutation Scott module $M_Q$ with respect to $(P, {^{t(Q)}V})$ and $\Sl_{\N_G(Q), P}$ is isomorphic to a direct summand of $\Res_{\N_G(Q)}^GM$. 
\end{itemize}

For any $Q\in\mathcal{S}$, we fix a quadruple $(t(Q), \mathcal{C}_{\N_G(Q)}(P, {^{t(Q)}V}), \Sl_{\N_G(Q), P}, M_Q)$. If for any $Q\in\mathcal{S}$, $M_Q$ is slash indecomposable, then $M$ is slash indecomposable. 
\end{Theorem}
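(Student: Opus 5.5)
We need to show that for every subgroup $R\le P$ (up to $G$-conjugacy), the slash construction $\Sl_{G,R}(M)$, viewed as a $k R\C_G(R)$-module, is indecomposable; the case $R=1$ is the indecomposability of $M$ itself. The plan is to reduce each such $R$ to a fully normalized representative and then to the local subgroup $\N_G(R)$, where the hypothesis on $M_R$ applies.

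First, fix a nontrivial $R\le P$. Because $\mathcal{F}_P(G)$ is saturated, $R$ is $G$-conjugate to a fully normalized $Q\le P$. The slash functor transports along conjugation (Biland's conjugation compatibility), so $\Sl_{G,R}(M)$ is indecomposable over $kR\C_G(R)$ if and only if $\Sl_{G,Q}(M)$ is indecomposable over $kQ\C_G(Q)$. By hypothesis $Q\trianglelefteq P$, so $Q\in\mathcal{S}$ and $P\le \N_G(Q)$.

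The central step is to compare $\Sl_{G,Q}(M)$ with $\Sl_{\N_G(Q),Q}(M_Q)$. Both the slash functor at $Q$ for $G$ and the one for $\N_G(Q)$ are computed after restricting to $\N_G(Q)$; with the compatibility of slash functors under restriction from \cite{MR3227305}, suitably normalized via the chosen $t(Q)$ and $\Sl_{\N_G(Q),P}$, this gives
\[
\Sl_{G,Q}(M)\cong \Sl_{\N_G(Q),Q}(\Res^G_{\N_G(Q)}M).
\]
By hypothesis, $M_Q$ is a direct summand of $\Res^G_{\N_G(Q)}M$, so $\Sl_{\N_G(Q),Q}(M_Q)$ is a direct summand of $\Sl_{G,Q}(M)$. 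Since $M_Q$ is slash indecomposable, $\Sl_{\N_G(Q),Q}(M_Q)$ is indecomposable (and nonzero) as a $kQ\C_G(Q)$-module. It then remains to prove that the other summands of $\Res^G_{\N_G(Q)}M$ are killed by $\Sl_{\N_G(Q),Q}$, so that the two slash modules coincide.

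To show this, I would use a Burry–Carlson–Puig/Green-correspondence style argument. Write $\Res^G_{\N_G(Q)}M=M_Q\oplus X$. The vertex of $M$ is $P$, and $\Sl_{G,P}(M)$ is the indecomposable $k\N_G(P)$-module determined by the Scott-type definition (Definition~\ref{Def2.8}). Therefore $\Sl_{\N_G(Q),P}$ of $M_Q\oplus X$ equals $\Sl_{G,P}(M)$, which is already accounted for by $M_Q$, and hence $\Sl_{\N_G(Q),P}(X)=0$. Consequently every indecomposable summand of $X$ has a vertex $T$ that does not contain a conjugate of $P$ inside $\N_G(Q)$.

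We must then show $\Sl_{\N_G(Q),Q}(X)=0$. This means no summand of $X$ has a vertex containing $Q$. I would argue through the trivial-source/Scott property: $M$ is a summand of $\Ind_P^G V'$ for the endopermutation extension, so by Mackey every summand of $X$ is a summand of $\Ind^{\N_G(Q)}_{\N_G(Q)\cap{}^gP}(\cdots)$. If $Q\le {}^gP\cap \N_G(Q)$, then saturation (Q fully normalized, the extension axiom) lets us replace $g$ by an element of $\N_G(Q)$ times something centralizing, up to $\N_P(Q)=P$. In that case the summand would lie in the part with vertex $P$, contradicting the previous paragraph. This is the step I expect to be the main obstacle: carrying over the Ishioka–Kunugi argument, which in their paper uses Brauer quotients of permutation modules, to slash functors with endopermutation sources. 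In particular one needs that the fusion-stability and the choice of $t(Q)$ make the relevant conjugated sources agree.

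Finally, with $\Sl_{G,Q}(M)\cong\Sl_{\N_G(Q),Q}(M_Q)$ indecomposable for every $Q\in\mathcal{S}$, and $M$ indecomposable for $R=1$, $M$ is slash indecomposable.
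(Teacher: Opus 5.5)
Your reduction to a fully normalized $Q\in\mathcal{S}$ matches the paper. So does the identification $\Sl_{G,Q}(M)\cong\Sl_{\N_G(Q),Q}(\Res^G_{\N_G(Q)}M)$ via Lemma~\ref{Lem2.9}, and the reformulation as showing $\Sl_{\N_G(Q),Q}(X)=0$ for the complement $X$ of $M_Q$. The gap is the step ``the summand would lie in the part with vertex $P$.'' Mackey's formula plus saturation (\cite[Lemma 3.2]{MR3565442}) only gives the following: an indecomposable summand $Y$ of $X$ with $\Sl_{\N_G(Q),Q}(Y)\neq 0$ has a vertex $B$ with $Q\le B\le_{\N_G(Q)}P$. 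Nothing forces $B=P$. Double cosets $\N_G(Q)gP$ with $Q\le{}^gP$ but ${}^gP\not\le\N_G(Q)$ occur as soon as some $G$-conjugate of $Q$ inside $P$ is not normal in $P$; for instance, $Z(D_8)$ in $\PSL(2,q)$ is conjugate to a non-central involution. For such $g$, $\Ind^{\N_G(Q)}_{\N_{{}^gP}(Q)}\Res({}^gV)$ may have summands whose vertex lies strictly between $Q$ and a conjugate of $P$. Your $P$-slash computation only excludes summands whose vertex contains a conjugate of $P$, so summands with intermediate vertices are not ruled out. A symptom of this: your argument at $Q$ uses only the hypothesis on $M_Q$ and never the hypotheses on $M_R$ for larger $R\in\mathcal{S}$, which the theorem genuinely needs. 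A smaller point: ``already accounted for by $M_Q$'' needs $\Res^{\N_G(P)}_{\N_G(Q,P)}\Sl_{G,P}(M)$ to be indecomposable. Restricting a projective cover to a subgroup does not preserve indecomposability in general. Here it does, because it is already indecomposable over $P\C_G(P)\le\N_G(Q,P)$ by \cite[Lemma 4.8]{MR3789018}.

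The missing idea is an induction on $|P|/|Q|$. One proves that $\Res^{\N_G(Q)}_{Q\C_G(Q)}\Sl_{G,Q}(M)$ is indecomposable for \emph{every} $Q\le P$, combining this with transitivity of slash functors (Lemma~\ref{Lem2.6}). The base case $Q=P$ is \cite[Lemma 4.8]{MR3789018}. For $Q<P$ in $\mathcal{S}$, suppose $L$ is an indecomposable summand of $\Sl_{\N_G(Q),Q}(X)$ with vertex $R$. Then $R\neq Q$ by Lemma~\ref{Lem2.5}(4), since otherwise $M$ would have a summand with vertex $Q$. The Mackey/saturation argument, after conjugation, gives $Q<R\le P$. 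Now apply an $R$-slash functor to $\Sl_{G,Q}(M)$. By Lemma~\ref{Lem2.6} and a twist by a linear character, the result is $\Res^{\N_G(R)}_{\N_G(Q,R)}\Sl_{G,R}(M)$, which is indecomposable by the induction hypothesis because $R\C_G(R)\le\N_G(Q,R)$. Yet it contains two nonzero pieces: one from $\Sl_{\N_G(Q),Q}(M_Q)$, which has a summand with vertex $P\ge R$, and one from $L$. This contradiction gives $\Sl_{\N_G(Q),Q}(X)=0$. Subgroups that are not fully normalized are then handled by conjugation (Lemma~\ref{lem2}).
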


Theorem \ref{Th1.2} treats the case where $P$ is an abelian subgroup. Although our main theorem does not directly yield slash indecomposability, it makes it possible to determine slash indecomposability by recursively reducing the problem to local $p$-subgroups. In particular, for some finite simple groups, such as $\PSL(2, q)$ and the Tits group $^2F_4(2)'$, we can handle some cases involving non-abelian Sylow subgroups.

Here is the structure of this paper. In section 2, we recall some definitions and properties about slash functor. In section 3, we give the proof of our main theorem \ref{Th3.3}. In section 4, we give two examples involving non-abelian subgroups.

The two examples in Section 4 are originally given by GPT-5.6, and we rewrite it here.

\section{Preliminaries}

For subgroups $Q, R\leq G$, we let
\begin{equation*}
\Hom_G(Q, R)= \{\phi:Q\rightarrow R\mid \text{$\phi$ is induced by a conjugation}\}.
\end{equation*}
For a $p$-subgroup $P\leq G$, the fusion system $\mathcal{F}_P(G)$ is the category whose objects are the subgroups of $P$ and whose morphism set from $Q$ to $R$ is $\Hom_G(Q, R)$. For background on fusion system, one could see \cite{MR2848834}.

Let $V, W$ be two $kG$-modules. We use $V\mid W$ to denote $V$ is isomorphic to a direct summand of $W$.

Now we mainly follow \cite{MR3789018, MR4142080} to recall some basic facts about slash functor. All modules are assumed to be finitely generated.

\begin{Definition}
Let $P$ be a $p$-group and let $V$ be an indecomposable $kP$-module. 

(1) If $P$ is a vertex of $V$, we say $V$ is \emph{capped} and $(P, V)$ is called a \emph{source pair}. 

(2) A source pair $(P, V)$ is called a \emph{fusion-stable endopermutation source pair} of $G$ if for any $p$-subgroup $Q\leq G$ and any two morphisms $\phi_1, \phi_2: Q\rightarrow P$ in $\Hom_G(Q, P)$, the direct sum $\Res_{\phi_1}V\oplus \Res_{\phi_2}V$  is an endopermutation $kQ$-module.

(3) We say two fusion-stable endopermutation source pairs $(P_1, V_1)$ and $(P_2, V_2)$ are \emph{compatible} if for any $p$-subgroup $Q\leq G$ and any two morphisms $\phi_1: Q\rightarrow P_1, \phi_2: Q\rightarrow P_2$, the direct sum $\Res_{\phi_1}V_1\oplus \Res_{\phi_2}V_2$ is an endopermutation $kQ$-module.

(4) We say a $kG$-module $M$ is \emph{Frobenius-friendly} if it is a direct sum of indecomposable $kG$-modules with compatible fusion-stable endopermutation source pairs. We say that two Frobenius-friendly $kG$-modules $L, M$ are \emph{compatible} if the direct sum $L\oplus M$ is also a Frobenius-friendly $kG$-module. 
\end{Definition}

Now we give the definition of Frobenius-friendly category.

\begin{Definition}
Let $\mathcal{C}$ be a full subcategory of the category $\mod(kG)$ consisting of finite-dimensional modules. $\mathcal{C}$ is called \emph{Frobenius-friendly} if: (1) For any object $V$ in $\mathcal{C}$, $V$ is a Frobenius-friendly module; (2) For any two objects $V_1, V_2$ in $\mathcal{C}$, they are compatible Frobenius-friendly modules.
\end{Definition}

A $kG$-module $W$ is called an \emph{endo-$p$-permutation} module if $\End_k(W)$ is a $p$-permutation module (refer to \cite{MR2354860}). By \cite[Remark 3.3]{MR3789018}, we can give an equivalent definition for Frobenius-friendly category in terms of endo-$p$-permutation module.

\begin{Definition}
Let $\mathcal{C}$ be a full subcategory of the category $\mod(kG)$ consisting of finite-dimensional modules. $\mathcal{C}$ is called \emph{Frobenius-friendly} if: (1) For any object $V$ in $\mathcal{C}$, $V$ is an endo-$p$-permutation module; (2) For any two objects $V_1, V_2$ in $\mathcal{C}$, $V_1\oplus V_2$ is also an endo-$p$-permutation module.
\end{Definition}

We shall freely use the two equivalent definitions above without further comment. Now we give the definition of the slash functor.

\begin{Definition}
Let $G$ be a finite group, and $\mathcal{C}$ be a full subcategory of $\mod(kG)$. Let $P$ be a $p$-subgroup of $G$, and $H$ be a subgroup of $G$ such that $P\C_G(P)\leq H\leq \N_G(P)$. What we called a \emph{$P$-slash} functor $\Sl: \mathcal{C}\rightarrow \mod(kH)$ is defined by the following data:

\begin{itemize}
\item for each object $V$ of $\mathcal{C}$, a $kH$-module $\Sl(V)$ s.t. $P\leq \ker(\Sl(V))$.

\item for each pair $V, W$ of objects in the category $\mathcal{C}$, a $k$-linear map
\end{itemize}

\begin{equation*}
\Sl^{V, W}: \Hom_{kP}(V, W)     \rightarrow \Hom_k(\Sl(V), \Sl(W));
\end{equation*}
such that

(1) $\Sl^{V, V}(id)= id$;

(2) $\Sl^{U, W}(v\circ u)= \Sl^{V, W}(v)\circ \Sl^{U, V}(u)$;

(3) for any two objects $V, W$ of $\mathcal{C}$, the map $\Sl^{V, W}$ factors through an isomorphism of $k(\C_G(P)\times \C_G(P))\Delta H$-modules
\begin{equation*}
\Br_{\Delta P}(\Hom_k(V, W))\cong \Hom_k(\Sl(V), \Sl(W)).
\end{equation*}
\end{Definition} 

\textbf{Remark 1.} By (3), $\Sl$ is a $k$-linear functor from $\mathcal{C}$ to $\mod(kH)$.

\textbf{Remark 2.} If $\mathcal{C}$ is the category consisting of $p$-permutation $kG$-modules, then the Brauer functor $\Br_P$ is a $P$-slash functor.

Let $H\leq G$ be a subgroup satisfying $P\C_G(P)\leq H\leq N_G(P)$. Let $L$ be a $kH$-module. Let $\chi: H/ P\C_G(P)\rightarrow k^\times$ be a linear character. We set $\chi L= L$ as a $k$-vector space, and the $H$-action is $h\cdot m= h\chi(\overline{h})m$. 

Now we recall some facts about the slash functor. We will use them in the next section. First, we introduce the existence of the slash functor and some relations between vertices and slash functors. (see \cite[Proposition 2.11]{MR4142080} and \cite[Proposition 3.4, Proposition 3.5]{MR3789018}). 

\begin{Lemma}\label{Lem2.5}
Let $G$ be a finite group, and $\mathcal{C}$ be a Frobenius-friendly category of $kG$-modules. Let $P$ be a $p$-subgroup of the group $G$, and $H$ a subgroup such that $P\C_G(P)\leq H\leq \N_G(P)$.

(1) There exists a $P$-slash functor $\Sl_P: \mathcal{C} \rightarrow \mod(kH)$.

(2) If $\Sl: \mathcal{C} \rightarrow \mod(kH)$ is another $P$-slash functor, then there exists a linear character $\chi: H/ P\C_G(P)\rightarrow k^\times$ and an isomorphism of slash functors $\Sl\cong \chi \Sl_P$.

(3) If $(Q, V)$ is a source pair of an indecomposable direct summand of $\Sl_P(M)$, where $M$ is an object in $\mathcal{C}$, then there is a source pair $(Q', V')$ of an indecomposable direct summand of $M$ such that $P\leq Q\leq Q'$.

(4) Assume $H= \N_G(P)$ and $M$ is an object in $\mathcal{C}$. If $\Sl_P(M)$ admits an indecomposable direct summand with vertex $P$, then the $kG$-module $M$ admits an indecomposable direct summand with vertex $P$.
\end{Lemma}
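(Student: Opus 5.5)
The plan is to work throughout with the conjugation modules $\Hom_k(V,W)$ and reduce every statement to known facts about the Brauer functor on $p$-permutation modules. This follows Biland \cite{MR3227305} and Feng--Li \cite{MR3789018}.

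\textbf{Part (1).} For objects $V,W$ of $\mathcal{C}$, the module $V\oplus W$ is endo-$p$-permutation, so $\Hom_k(V,W)$ is a $p$-permutation $kG$-module (acting by conjugation), and the Brauer quotient $\Br_{\Delta P}(\Hom_k(V,W))$ is defined.
\begin{itemize}
\item Take a finite set of representatives $V_1,\dots,V_n$ of the isomorphism classes of indecomposable objects appearing as summands, and put $X=\bigoplus V_i$.
\item The algebra $A=\Br_{P}(\End_k(X))$ is a quotient of $\End_k(X)^P=\End_{kP}(X)$. Because $X$ is endo-$p$-permutation, Dade's theory of endopermutation modules (as used by Biland) shows that $A$ is semisimple, hence a product of matrix algebras. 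I would argue this via the direct summands of $\Res_P X$ with vertex $P$ and their caps.
\item Write $A\cong\End_k(S)$ for a semisimple module $S$. The group $H$ acts on $A$ by algebra automorphisms, and $P$ acts trivially.
\item The key point is to lift this action to a genuine $kH$-module structure on $S$ with $P$ in the kernel. Since the $P$-fixed points and the action of $\C_G(P)$ are already realized inside $\End_{kP}(X)$, the action of $P\C_G(P)$ lifts canonically.
\item The remaining obstruction is a class in $H^2$ of the $p'$-part of $H/P\C_G(P)$ with values in $k^\times$. I would kill it using Biland's normalization: choose the lift of each automorphism to have determinant $1$ on each simple factor. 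This works because $k$ is algebraically closed.
\item Set $\Sl(V)=e_V S$, where $e_V$ is the image of the projection onto $V$. Composition and identity are then automatic, and property (3) of the definition holds by construction.
\end{itemize}
This cohomological lifting step is where I expect the main difficulty. If the normalization fails, I would simply cite \cite[Proposition 2.11]{MR4142080}.

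\textbf{Part (2).} Given two $P$-slash functors $\Sl$ and $\Sl_P$, property (3) identifies $\Hom_k(\Sl(V),\Sl(W))$ and $\Hom_k(\Sl_P(V),\Sl_P(W))$ $H$-equivariantly with the same Brauer quotient. By Skolem--Noether applied to $\End_k(\Sl(X))\cong\End_k(\Sl_P(X))$, there is a linear isomorphism $\Sl(X)\to\Sl_P(X)$, unique up to a scalar, that intertwines the two actions up to a scalar function $\chi:H\to k^\times$.
\begin{itemize}
\item Uniqueness of the scalar makes $\chi$ a homomorphism.
\item $\chi$ is trivial on $P\C_G(P)$, since these elements act through the same elements of the Brauer quotient in both functors.
\item Compatibility with the idempotents $e_V$ gives natural isomorphisms $\Sl\cong\chi\Sl_P$.
\end{itemize}

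\textbf{Parts (3) and (4).} Let $N$ be an indecomposable summand of $\Sl_P(M)$ with source pair $(Q,V)$, where $P\le Q$ since $P$ acts trivially.
\begin{itemize}
\item Having vertex $Q$ is detected by $\Br_{Q}(\End_k(N))$ not vanishing under the relative trace from proper subgroups. Equivalently, the projection idempotent $e_N$ survives in $\Br_Q$ and does not lie in the relative trace image $\sum_{R<Q}\mathrm{Tr}_R^Q$.
\item By property (3) and transitivity of Brauer quotients, $\Br_{Q/P}\Br_P(\End_k(M))$ is a quotient of $\Br_Q(\End_k(M))$, computed inside $\End_k(M)$.
\item Lifting $e_N$ then gives a primitive idempotent of $\End_{kG}(M)$ whose Brauer image at $Q$ is nonzero. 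By Higman's criterion for $p$-permutation modules, this produces a summand of $M$ with vertex $Q'\ge Q$.
\item For (4), take $Q=P$ with $H=\N_G(P)$. A summand of $\Sl_P(M)$ with vertex $P$ means $e_N$ is not in the image of $\mathrm{Tr}_{R}^{\N_G(P)}$ for $R\not\ge P$. The Brauer morphism commutes with traces from subgroups containing $P$ (Green correspondence style), so the lifted idempotent is not relatively projective from any proper subgroup of $P$.
\item Combined with (3), which gives $Q'\ge P$, this forces $Q'=P$: a summand of $M$ with vertex exactly $P$. This is the argument of \cite[Propositions 3.4, 3.5]{MR3789018}.
\end{itemize}
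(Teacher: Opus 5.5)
The paper gives no argument of its own for this lemma; it simply cites \cite[Proposition 2.11]{MR4142080} and \cite[Propositions 3.4, 3.5]{MR3789018}. Measured against that, your sketch has genuine gaps.

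\textbf{Part (4).} This is the most serious gap, because the logic runs in the wrong direction. Since $P\trianglelefteq\N_G(P)$, every indecomposable $k\N_G(P)$-module already has a vertex containing $P$. So ``$e_N$ is not a trace from $R\not\geq P$'' carries no information. What ``$N$ has vertex $P$'' actually says is the \emph{upper} bound: $N$ is relatively $P$-projective, i.e.\ projective over $k[\N_G(P)/P]$, i.e.\ $e_N\in\mathrm{Tr}_P^{\N_G(P)}(\End_k(\Sl_P(M)))$. Your conclusion ``not relatively projective from any proper subgroup of $P$'' and the inclusion $Q'\geq P$ from (3) are both lower bounds. They cannot force $Q'=P$, and (3) may well return a summand of vertex strictly larger than $P$.

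The missing step goes as follows.
\begin{itemize}
\item Write $e_N=\mathrm{Tr}_P^{\N_G(P)}(b)$ and lift $b$ to $a\in\End_{kP}(M)$; this is possible because the Brauer map is onto.
\item Use $\Br_P(\mathrm{Tr}_P^G(a))=\mathrm{Tr}_P^{\N_G(P)}(\Br_P(a))$. This holds because the double cosets $PgP$ with $g\notin\N_G(P)$ contribute traces from $P\cap{}^gP<P$, which $\Br_P$ kills.
\item Let $f$ be the sum of the primitive idempotents $f_j$ of $\End_{kG}(M)$ with $\Br_P(f_j)\neq 0$. Then $e_N=\Br_P(\mathrm{Tr}_P^G(faf))$.
\item Hence $\mathrm{Tr}_P^G(f\End_{kP}(M)f)$ is an ideal of $f\End_{kG}(M)f$ not contained in its radical, so it contains a primitive idempotent $f'$.
\item By Higman's criterion $f'M$ is relatively $P$-projective. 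It is also isomorphic to some $f_jM$ with $\Br_P(f_j)\neq0$, so its vertex is exactly $P$.
\end{itemize}

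\textbf{Part (1).} Normalizing lifts to determinant $1$ only reduces the obstruction cocycle to one with values in $\mu_n$, where $n=\dim S$. This suffices for $p$-groups, since $\mu_n$ has $p'$-order, and that is why Dade's slash construction works there. But the obstruction you isolate lives on the $p'$-part, where $H^2(X,\mu_n)$ is in general nonzero; think of the two-dimensional projective representation of $C_2\times C_2$ in odd characteristic. So that step proves nothing, and (1) rests entirely on your fallback citation, exactly as in the paper. Also, $A$ is in fact simple, not merely semisimple: compatibility forces all vertex-$P$ summands of $\Res_P X$ to be isomorphic to a single cap. Without that, writing $A\cong\End_k(S)$ would be wrong.

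\textbf{Part (3).} You cannot ``lift $e_N$ to a primitive idempotent of $\End_{kG}(M)$''. The map $\End_{kG}(M)\to\End_{kH}(\Sl_P(M))$ is not surjective, and lifting through $\End_{kH}(\Res^G_H M)$ only yields summands of $\Res^G_H M$, not of $M$. Instead:
\begin{itemize}
\item Push a primitive decomposition $1=\sum_i f_i$ in $\End_{kG}(M)$ forward, and use Krull--Schmidt to place $N$ in some $\Sl_P(f_i)\Sl_P(M)$.
\item Then $\End_k(N)$ is a $\Delta Q$-summand of $\Br_P(f_i\End_k(M)f_i)$.
\item So $\Br_Q(\End_k(N))\neq 0$ gives $\Br_Q(\End_k(f_iM))\neq 0$, via the surjection $\Br_Q(Y)\to\Br_Q(\Br_P(Y))$.
\item Hence $Q\leq_G\vx(f_iM)$, by vertex detection for endo-$p$-permutation modules (Remark 3 after Definition~\ref{Def2.8}). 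Higman's criterion for $p$-permutation modules is not the relevant tool here.
\end{itemize}

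\textbf{Part (2).} Your Skolem--Noether argument is fine up to bookkeeping.
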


The following lemma is about transitivity of slash functors (see \cite[Proposition 2.14, Proposition 2.15]{MR4142080}).

\begin{Lemma}\label{Lem2.6}
Let $G$ be a finite group, and $\mathcal{C}$ be a Frobenius-friendly category of $kG$-modules. Let $P\lhd Q$ be $p$-subgroups. Let $\Sl_P: \mathcal{C} \rightarrow \mod(k\N_G(P))$ be a $P$-slash functor. There exists a Frobenius-friendly category $\mathcal{C}'$ of $k\N_G(P)$-modules that contains the essential image of $\Sl_P$. Let $\Sl_Q: \mathcal{C}' \rightarrow \mod(k\N_G(P, Q))$ be a $Q$-slash functor. Then the composition 
\begin{equation}
\Sl_Q\circ \Sl_P: \mathcal{C}\rightarrow \mod(k\N_G(P, Q))
\end{equation}
is a $Q$-slash functor, where $\N_G(P, Q)= \N_G(P)\cap \N_G(Q)$.
\end{Lemma}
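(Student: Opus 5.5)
The plan is to construct $\mathcal{C}'$ explicitly from the essential image of $\Sl_P$, and then to check the three axioms of a $Q$-slash functor for $\Sl_Q\circ\Sl_P$ directly. The key input is the transitivity of the Brauer quotient for $P\lhd Q$: for a $p$-permutation $kL$-module $X$ with $Q\leq L$, there is a natural isomorphism
\begin{equation*}
\Br_{Q}(X)\cong \Br_{Q/P}(\Br_P(X))
\end{equation*}
of $k\N_L(P,Q)$-modules. It is compatible with the Brauer projections $X^Q\to X^Q/\!\sum_{R<Q}\mathrm{tr}_R^Q X^R$. I would apply this to $X=\Hom_k(V,W)$, with the diagonal subgroups $\Delta P\lhd\Delta Q$ acting on it.

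\textbf{Step 1: the category $\mathcal{C}'$.} Let $\mathcal{C}'$ be the full subcategory of $\mod(k\N_G(P))$ whose objects are the modules isomorphic to direct summands of finite direct sums of modules $\Sl_P(V)$ with $V\in\mathcal{C}$. Since $\Sl_P$ is additive, such a sum is a summand of $\Sl_P(V_1\oplus\cdots\oplus V_n)$. This is harmless only if $V_1\oplus\cdots\oplus V_n$ again lies in $\mathcal{C}$, which I would arrange by first enlarging $\mathcal{C}$ to its closure under finite sums and summands; by the endo-$p$-permutation form of the definition, this closure is still Frobenius-friendly.

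To see that $\mathcal{C}'$ is Frobenius-friendly, I would use the second definition of a Frobenius-friendly category. Take $U=\Sl_P(V)$. By axiom (3) with $V=W$, $\End_k(U)\cong\Br_{\Delta P}(\End_k(V))$ as $k\Delta\N_G(P)$-modules. The module $\End_k(V)$ is a $p$-permutation $kG$-module, and the Brauer quotient of a $p$-permutation module is a $p$-permutation $k\N_G(P)$-module. Hence $U$ is endo-$p$-permutation. The same argument for $V_1\oplus V_2$ covers $\Sl_P(V_1)\oplus\Sl_P(V_2)$. Passing to direct summands preserves the endo-$p$-permutation property, because $\End_k$ of a summand is a summand of $\End_k$ of the whole module.

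\textbf{Step 2: the composite is well defined and functorial.} For $V\in\mathcal{C}$, the module $\Sl_Q(\Sl_P(V))$ is a $k\N_G(P,Q)$-module with $Q$ in its kernel, since $\Sl_Q$ already has this property. The data of a $Q$-slash functor requires a map on $\Hom_{kQ}(V,W)$. Because $Q\leq\N_G(P)$ and the isomorphism in axiom (3) for $\Sl_P$ is $\Delta\N_G(P)$-equivariant, the map $\Sl_P^{V,W}$ sends $Q$-fixed points to $Q$-fixed points. So $\Sl_P^{V,W}$ restricts to a map
\begin{equation*}
\Hom_{kQ}(V,W)\to\Hom_{kQ}(\Sl_P V,\Sl_P W),
\end{equation*}
and composing this with $\Sl_Q^{\Sl_PV,\Sl_PW}$ defines $(\Sl_Q\circ\Sl_P)^{V,W}$. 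Axioms (1) and (2) for the composite follow at once from the corresponding axioms for the two factors.

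\textbf{Step 3: axiom (3), the main obstacle.} I need an isomorphism
\begin{equation*}
\Br_{\Delta Q}(\Hom_k(V,W))\cong\Hom_k(\Sl_Q\Sl_P V,\Sl_Q\Sl_P W)
\end{equation*}
of $k(\C_G(Q)\times\C_G(Q))\Delta\N_G(P,Q)$-modules, through which the composite map factors. I would build it as a chain of three isomorphisms:
\begin{itemize}
\item first, transitivity gives $\Br_{\Delta Q}(\Hom_k(V,W))\cong\Br_{\Delta Q/\Delta P}\bigl(\Br_{\Delta P}(\Hom_k(V,W))\bigr)$;
\item second, applying $\Br_{\Delta Q}$ to the axiom-(3) isomorphism for $\Sl_P$ gives $\Br_{\Delta Q}(\Hom_k(\Sl_PV,\Sl_PW))$;
\item third, the axiom-(3) isomorphism for $\Sl_Q$ gives $\Hom_k(\Sl_Q\Sl_PV,\Sl_Q\Sl_PW)$.
\end{itemize}
For equivariance, note that $\C_G(Q)\leq\C_G(P)\cap\N_G(P,Q)$. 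So $\C_G(Q)\times\C_G(Q)$ and $\Delta\N_G(P,Q)$ act compatibly at every stage, because each stage is a restriction of the actions already preserved by the isomorphisms for $\Sl_P$ and $\Sl_Q$. For the factorization, I would check that the following two routes from $\Hom_k(V,W)^{\Delta Q}$ to $\Br_{\Delta Q}(\Hom_k(V,W))$ agree:
\begin{itemize}
\item projecting first through $\Br_{\Delta P}$ and then through $\Br_{\Delta Q/\Delta P}$;
\item projecting directly.
\end{itemize}
This holds because relative traces from subgroups $R<Q$ with $P\not\leq R$ already vanish after the first projection. This bookkeeping, that is, naturality of the transitivity isomorphism with respect to the Brauer projections and the group actions, is the step I expect to require the most care.
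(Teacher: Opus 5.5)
The paper gives no proof of its own for this lemma and simply cites \cite[Propositions 2.14, 2.15]{MR4142080}. Your proposal is correct and follows the standard argument for this transitivity: $\End_k(\Sl_P V)\cong\Br_{\Delta P}(\End_k V)$ makes the image Frobenius-friendly, and the transitivity $\Br_{\Delta Q}\cong\Br_{\Delta Q/\Delta P}\circ\Br_{\Delta P}$ on the $p$-permutation module $\Hom_k(V,W)$ gives axiom (3) for the composite. The enlargement of $\mathcal{C}$ in Step 1 is unnecessary, and it would also require extending $\Sl_P$: applying axiom (3) to each pair $(V_i,V_j)$ already shows that $\End_k(\bigoplus_i\Sl_P V_i)\cong\bigoplus_{i,j}\Br_{\Delta P}(\Hom_k(V_i,V_j))$ is a $p$-permutation $k\N_G(P)$-module.
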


The following lemma gives a bijection between indecomposable modules and indecomposable projective modules, which plays a key role in this paper (see \cite[Proposition 2.16]{MR4142080}).

\begin{Lemma}
Let $G$ be a finite group, and let $(P, M)$ be a fusion-stable endopermutation source pair of the group $G$. Let $\mathcal{C}$ be a Frobenius-friendly category of $kG$-modules such that any finite sum of indecomposable $kG$-modules with source pair $(P, M)$ is an object of $\mathcal{C}$. Let $\Sl_P: \mathcal{C}\rightarrow \mod(k\N_G(P))$ be a $P$-slash functor. Then the mapping $X\mapsto \Sl_P(X)$ induces a one-to-one correspondence between the isomorphism classes of indecomposable $kG$-modules with source pair $(P, M)$ and the isomorphism classes of projective indecomposable $k\N_G(P)/ P$-modules.
\end{Lemma}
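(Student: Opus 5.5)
The plan is to mimic Broué's classical proof that the Brauer functor $\Br_P$ induces a bijection between indecomposable $p$-permutation $kG$-modules with vertex $P$ and projective indecomposable $k\N_G(P)/P$-modules. The slash functor replaces $\Br_P$, and axiom (3) of a $P$-slash functor is the bridge. Throughout, write $N=\N_G(P)$. For objects $X,Y$ of $\mathcal{C}$ with source pair $(P,M)$, axiom (3) gives isomorphisms
\begin{equation*}
\Br_{\Delta P}(\Hom_k(X,Y))\cong \Hom_k(\Sl_P(X),\Sl_P(Y)).
\end{equation*}
These are compatible with the $N$-action and with composition. Taking $N$-fixed points therefore gives $\Br_{\Delta P}(\Hom_k(X,Y))^N\cong \Hom_{kN}(\Sl_P(X),\Sl_P(Y))$. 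Fusion-stability guarantees that $\Hom_k(X,Y)$ is a $p$-permutation $kG$-module (via $\Delta G$), so $\Br_{\Delta P}$ behaves as usual on it. In particular, the Brauer morphism commutes with relative traces through the Mackey formula.

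\textbf{Step 1: $\Sl_P(X)$ is projective over $N/P$.} Since $X$ has vertex $P$, Higman's criterion gives $\mathrm{id}_X=\mathrm{Tr}_P^G(\varphi)$ for some $\varphi\in\End_{kP}(X)$. Apply $\Br_P$ (that is, $\Br_{\Delta P}$ on $\End_k(X)$) and expand with the Mackey formula. Every double-coset term with $P\cap {^gP}<P$ is killed, so $\Br_P(\mathrm{Tr}_P^G\varphi)=\mathrm{Tr}_P^N(\Br_P\varphi)$. Via axiom (3) this says $\mathrm{id}_{\Sl_P(X)}$ is a relative trace from $P$. Hence $\Sl_P(X)$ is relatively $P$-projective as a $kN$-module. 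Since $P$ acts trivially on it, it is projective as a $k[N/P]$-module.

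\textbf{Step 2: nonvanishing and indecomposability.} Nonvanishing holds because $M$ is capped: $\Br_P(\End_k(M))\neq0$, and $X\mid \Ind_P^GM$ with $M\mid\Res_P X$. Alternatively, Lemma 2.5(4) applied in reverse combined with the vertex computation gives the same conclusion.

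For indecomposability, I show that $\Br_P$ maps $\End_{kG}(X)=\End_k(X)^G$ onto $\End_{kN}(\Sl_P(X))$. The argument is as follows. Because $\Sl_P(X)$ is projective over $N/P$, every $kN$-endomorphism of it is a trace $\mathrm{Tr}_P^N(\bar\psi)$. Lift $\bar\psi$ to some $\psi\in \End_k(X)^P$, which is possible since $\Br_P$ is surjective on $P$-fixed points. Then $\Br_P(\mathrm{Tr}_P^G\psi)=\mathrm{Tr}_P^N(\bar\psi)$, again by Mackey. A surjective image of the local ring $\End_{kG}(X)$ is local, provided it is nonzero, so $\Sl_P(X)$ is indecomposable.

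The same argument applied to $\Hom_{kG}(X,Y)$ shows that $\Br_P$ maps this space onto $\Hom_{kN}(\Sl_P X,\Sl_P Y)$. This yields injectivity of the correspondence: an isomorphism $\Sl_PX\cong\Sl_PY$ lifts to maps $f\colon X\to Y$ and $g\colon Y\to X$. The composite $gf$ is not in $J(\End_{kG}(X))$, since its image is invertible. So $gf$ is invertible, and likewise $fg$, giving $X\cong Y$.

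\textbf{Step 3: surjectivity, which I expect to be the main obstacle.} Every projective indecomposable $k[N/P]$-module must occur. I would consider $\Ind_P^GM$ and its decomposition. Summands with vertex strictly smaller than $P$ are killed by $\Sl_P$ by Lemma 2.5(3). By Green correspondence and fusion-stability, summands with vertex $P$ have source pairs $N$-conjugate to $(P,M)$. This is the place where fusion-stability (together with the allowance for twisting by characters, Lemma 2.5(2)) is needed to keep them in the set under study.

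It then remains to compute $\Sl_P(\Ind_P^GM)$ and show it contains a free $k[N/P]$-module. Heuristically, Mackey gives $\Res_N\Ind_P^GM\supseteq\Ind_P^NM$, and the slash functor of $M$ over $P$ is one-dimensional-type data, since $M$ is capped endopermutation and $\Br_P(\End_kM)\cong k$. This should give $\Sl_P(\Ind_P^GM)\supseteq k[N/P]^{\oplus m}$ with $m\geq 1$. Verifying this compatibility of $\Sl_P$ with induction, using only axiom (3) and endomorphism algebras, is the delicate point. I would attack it by computing $\Br_{\Delta P}(\End_k(\Ind_P^GM))$ via Mackey and reading off its $N$-module structure.
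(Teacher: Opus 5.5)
The paper gives no argument for this lemma; it quotes it from \cite[Proposition 2.16]{MR4142080}. Your Broué-style argument therefore has to stand on its own. Steps 1 and 2 and the injectivity argument do. The identity $\Br_{\Delta P}(\mathrm{Tr}_P^G\psi)=\mathrm{Tr}_P^N(\Br_{\Delta P}\psi)$ for $\psi\in\Hom_k(X,Y)^{P}$, transported through the $\Delta N$-equivariant isomorphism of axiom (3), gives four things: projectivity of $\Sl_P(X)$ over $k[N/P]$, surjectivity of $\Hom_{kG}(X,Y)\to\Hom_{kN}(\Sl_PX,\Sl_PY)$, locality of $\End_{kN}(\Sl_PX)$, and injectivity. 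Nonvanishing follows from $M\mid\Res_PX$ and $\Br_P(\End_kM)\cong k$. The alternative via Lemma \ref{Lem2.5}(4) ``in reverse'' is not an argument, but you do not need it.

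The genuine gap is Step 3: surjectivity is only asserted heuristically, and the heuristic does not work as stated. First, $\Ind_P^GM$ need not be an object of $\mathcal{C}$, since its summands of smaller vertex need not be, so $\Sl_P(\Ind_P^GM)$ is undefined. You must work with the sum $Z=\bigoplus_iX_i$ of its summands of vertex $P$. By your fusion-stability argument these all have source pair $(P,M)$ (indeed ${}^nM\cong M$ for $n\in N$), so $Z\in\mathcal{C}$ and every module in question is some $X_i$. Second, the axioms give no compatibility of $\Sl_P$ with restriction to $N$ or with induction. So ``$\Res_N\Ind_P^GM\supseteq\Ind_P^NM$'' and ``$\Br_P(\End_kM)\cong k$'' do not by themselves yield $\Sl_P(Z)\supseteq k[N/P]$. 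Your only handle is $\End_k(\Sl_PZ)\cong\Br_{\Delta P}(\End_kZ)$, which sees $\Sl_P(Z)\otimes\Sl_P(Z)^*$ rather than $\Sl_P(Z)$, and an extra idea is needed to get from one to the other.

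One way to close it: by Mackey, $\Br_{\Delta P}(\End_kZ)=\Br_{\Delta P}(\End_k\Ind_P^GM)\cong k[N/P]^{\oplus[N:P]}$ as $N/P$-modules. The small-vertex summands and the double cosets $PgP$ with $g\notin N$ are killed, and each $g\in N$ contributes $\Ind_1^{N/P}\Br_P(\End_kM)$ with $\Br_P(\End_kM)\cong k$. Put $H=N/P$ and $U=\Sl_P(Z)$. Then $U$ is projective by Step 1, $\dim U=|H|$ because $\dim\End_kU=|H|^2$, and $\dim\End_{kH}(U)=\dim(\End_kU)^H=|H|$. Write $U\cong\bigoplus_iE_i^{m_i}$ over the projective indecomposables, let $d_i=\dim S_i$, and let $C$ be the Cartan matrix. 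Then $\dim U=d^{T}Cm$, $\dim\End_{kH}(U)=m^{T}Cm$ and $|H|=d^{T}Cd$. Hence $(m-d)^{T}C(m-d)=0$. Since $C=D^{T}D$ with $D$ the decomposition matrix of full column rank, $C$ is positive definite, so $m=d$ and $U\cong kH$. Every projective indecomposable $kH$-module is therefore a summand of $U=\bigoplus_i\Sl_P(X_i)$, and by your Step 2 and Krull--Schmidt it is isomorphic to some $\Sl_P(X_i)$.
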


By the above Lemma, we can give the definitions of endopermutation Scott module and slash indecomposability (see \cite[Definition 4.1, Definition 5.1]{MR3789018}).

\begin{Definition}\label{Def2.8}[Feng--Li]
Let $(P, V)$ be a fusion-stable endopermutation source pair, and let $\mathcal{C}, \Sl_P$ be as in the above lemma. 

(1)There is a unique (up to isomorphism) indecomposable module $M$ with source pair $(P, V)$ in $\mathcal{C}$ such that the $k[\N_G(P)/ P]$-module $\Sl_P(M)$ has trivial top, i.e. $\Sl_P(M)/ \rad(\Sl_P(M))\cong {_{\N_G(P)/P}k}$, which is a 1-dimensional module with trivial $\N_G(P)/ P$-action. Then $M$ is called an \emph{endopermutation Scott module} with respect to $(P, V)$ and $\Sl_P$.

(2) A Frobenius-friendly $kG$-module $V$ is said to be \emph{slash indecomposable} if for any $p$-subgroup $Q\leq G$ and $Q$-slash functor $\Sl_Q: \mathcal{C}\rightarrow \mod(k\N_G(Q))$, where $\mathcal{C}$ is a Frobenius-friendly category containing $V$, we have $\Sl_Q(V)$ is indecomposable or zero as a $k[Q\C_G(Q)/ Q]$-module.
\end{Definition}

\textbf{Remark 1.} For any linear character $\chi$ of $\N_G(Q)/ Q\C_G(Q)$, we have $\chi(\overline{x})= 1$ if $x\in \C_G(Q)$. Thus slash indecomposability is independent of the choice of slash functors (see \cite[Remark 5.2]{MR3789018}).

\textbf{Remark 2.} By \cite[Lemma 2.1]{MR3565442}, The Scott module $S(G, P)$ is the endopermutation Scott module with respect to $(P, {_Pk})$ and $\Br_P$. 

\textbf{Remark 3.} Suppose that $M$ is an indecomposable Frobenius-friendly $kG$-module and $P$ is a $p$-subgroup of $G$. Then $\Sl_P(M)\neq 0\Leftrightarrow P\leq_G vx(M)$ (see \cite[Remark 3.8]{MR3789018}).

The following fact will be frequently used, and it follows from \cite[Lemma 4.7]{MR3789018}.

\begin{Lemma}\label{Lem2.9}
Suppose that $R, H$ are subgroups of a finite group $G$ such that $R\leq H$. Assume $R$ is a $p$-subgroup. Let $\mathcal{C}$ be a Frobenius-friendly category of $kG$-modules and $V$ is an object in $\mathcal{C}$. Let $\Sl_{G, R}: \mathcal{C}\rightarrow \mod(k\N_G(R))$ be an $R$-slash functor over $G$. Then there exists an $R$-slash functor over $H$, denoted by $\Sl_{H, R}$, such that $\Res_{\N_H(R)}^{\N_G(R)}(\Sl_{G, R}(V))= \Sl_{H, R}(\Res_H^G(V))$. 
\end{Lemma}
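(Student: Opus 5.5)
The approach is to build $\Sl_{H,R}$ directly by transporting $\Sl_{G,R}$ along restriction, and then to check the defining axioms of an $R$-slash functor one at a time.

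\textbf{The category.} Let $\mathcal{C}_H$ be the full subcategory of $\mod(kH)$ whose objects are the modules $\Res_H^G(W)$ with $W$ an object of $\mathcal{C}$. It is Frobenius-friendly. Indeed, $\End_k(\Res_H^G W)=\Res_H^G\End_k(W)$. Restriction of a $p$-permutation module to a subgroup is again $p$-permutation, since restrictions of permutation modules are permutation modules and direct summands are preserved. The same holds for $\Res_H^G(W_1)\oplus\Res_H^G(W_2)=\Res_H^G(W_1\oplus W_2)$. So we may use the endo-$p$-permutation characterization of Frobenius-friendly categories.

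\textbf{Definition on objects.} For each object $X$ of $\mathcal{C}_H$, choose once and for all some $W_X\in\mathcal{C}$ with $\Res_H^G W_X=X$. For $X=\Res_H^G V$ we take $W_X=V$. Set
\begin{equation*}
\Sl_{H,R}(X)=\Res^{\N_G(R)}_{\N_H(R)}\Sl_{G,R}(W_X).
\end{equation*}
This is a $k\N_H(R)$-module on which $R$ acts trivially. Note also that $R\C_H(R)\le\N_H(R)\le\N_H(R)$, so the target group is allowed.

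\textbf{Definition on morphisms.} We have the literal equality $\Hom_{kR}(X,Y)=\Hom_{kR}(W_X,W_Y)$, because both sides consist of the same $k$-linear maps commuting with $R$. We therefore set
\begin{equation*}
\Sl_{H,R}^{X,Y}:=\Sl_{G,R}^{W_X,W_Y}.
\end{equation*}

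\textbf{Axioms (1) and (2).} Identities and composition are preserved, since they are inherited verbatim from $\Sl_{G,R}$.

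\textbf{Axiom (3).} Again $\Hom_k(X,Y)=\Hom_k(W_X,W_Y)$ as $k$-spaces. The $(H\times H)$-action on the left is the restriction of the $(G\times G)$-action on the right. Moreover, $\Br_{\Delta R}$ only involves $R$-fixed points and relative traces from proper subgroups of $R$, so it commutes with restriction from $(\N_G(R)\times\N_G(R))$-modules to $(\N_H(R)\times\N_H(R))$-modules.

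Hence the isomorphism
\begin{equation*}
\Br_{\Delta R}(\Hom_k(W_X,W_Y))\cong\Hom_k(\Sl_{G,R}(W_X),\Sl_{G,R}(W_Y)),
\end{equation*}
which is an isomorphism of $k(\C_G(R)\times\C_G(R))\Delta\N_G(R)$-modules, restricts to an isomorphism of $k(\C_H(R)\times\C_H(R))\Delta\N_H(R)$-modules. This uses $\C_H(R)\le\C_G(R)$ and $\N_H(R)\le\N_G(R)$. Since $\Sl^{W_X,W_Y}_{G,R}$ factors through this isomorphism, so does $\Sl^{X,Y}_{H,R}$.

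\textbf{Conclusion.} $\Sl_{H,R}$ is an $R$-slash functor on $\mathcal{C}_H$ over $H$, and by the choice $W_{\Res_H^G V}=V$ we get $\Sl_{H,R}(\Res_H^G V)=\Res^{\N_G(R)}_{\N_H(R)}\Sl_{G,R}(V)$.

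\textbf{Main obstacle.} The main point of care is well-definedness: distinct $W$'s may have the same restriction, and this is why a choice of preimage is made for each object. If instead a slash functor on an arbitrary prescribed Frobenius-friendly category containing $\Res_H^G V$ is required, I would invoke Lemma~\ref{Lem2.5}(1),(2). Any $R$-slash functor there agrees with the one constructed above up to a linear character $\chi$ of $\N_H(R)/R\C_H(R)$, and I would absorb $\chi$ by twisting the chosen functor.
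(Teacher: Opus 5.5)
Your proof is correct. The paper gives no argument of its own here and simply cites \cite[Lemma 4.7]{MR3789018]}, so your proposal supplies the direct verification that the citation delegates. Concretely, you define the functor on $\Res^G_H\mathcal{C}$ as $\Res^{\N_G(R)}_{\N_H(R)}\circ\Sl_{G,R}$. You then check axioms (1)--(3), using that $\Br_{\Delta R}$ commutes with restriction from $\N_{G\times G}(\Delta R)$ to $\N_{H\times H}(\Delta R)=(\C_H(R)\times\C_H(R))\Delta\N_H(R)$. Fixing a preimage $W_X$ for each object is a sound way to handle the fact that restriction need not be injective on objects.

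Your final paragraph is not optional for this paper. In Lemma~\ref{lem1} and in the proof of Theorem~\ref{Th3.3}, the lemma is applied to categories such as $\{\Res^G_H M\}\cup\Decom(\Res^G_H M)$. Their indecomposable summands are in general not restrictions of objects of $\mathcal{C}$, so $\Res^G_H\mathcal{C}$ alone is not enough.

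Your twisting step does handle this, but state it precisely. Lemma~\ref{Lem2.5}(2) only compares slash functors on the same category, so proceed as follows:
\begin{itemize}
\item Restrict both functors to the one-object full subcategory on $\Res^G_H V$, which is Frobenius-friendly as a full subcategory of either domain, and obtain $\chi$ there.
\item Twist the given functor on the whole prescribed category by $\chi^{-1}$. It remains a slash functor, since $\chi$ is trivial on $R\C_H(R)$ and cancels in the diagonal action on $\Hom_k$.
\item If you want literal equality rather than isomorphism, transport structure along the resulting isomorphism.
\end{itemize}
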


\section{Proof of Theorems}

Let $G$ be a finite group and let $P$ be a $p$-subgroup. Let $(P, V)$ be a fusion-stable endopermutation source pair of $G$. We use $\mathcal{C}_G(P, V)$ to denote a Frobenius-friendly category of $kG$-modules such that any finite sum of indecomposable $kG$-modules with source pair $(P, V)$ is an object of $\mathcal{C}_G(P, V)$ (There may be different choices for $\mathcal{C}_G(P, V)$, using this symbol indicates that we choose one of them).

Let $M$ be a finite-dimensional $kG$-module. We use $\Decom(M)$ to denote the set of all indecomposable summands appearing in some indecomposable decomposition of $M$ (There may be different choices for $\Decom(M)$, but by Krull--Schmidt--Azumaya theorem we know it is unique up to isomorphism).

\begin{Lemma}\label{lem1}
Let $G$ be a finite group and let $P$ be a $p$-subgroup. Let $H$ be a subgroup of $G$ such that $P\leq H$. Let $(P, V)$ be a fusion-stable endopermutation source pair of $G$ and we choose a $\mathcal{C}_G(P, V)$. Let $\Sl_{G, P}: \mathcal{C}_G(P, V)\rightarrow \mod(k\N_G(P))$ be a $P$-slash functor. Let $M$ be an endopermutation Scott module with respect to $(P, V)$ and $\Sl_{G, P}$. Then there exists a fusion-stable endopermutation source pair $(P, {^tV})$ of $H$, where $t\in \N_G(P)$, such that there is an endopermutation Scott module $\widetilde{M}$ with source pair $(P, {^tV})$ satisfying $\widetilde{M}\mid \Res_H^G(M)$.
\end{Lemma}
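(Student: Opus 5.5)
Since $M$ is a Scott-type module, the natural route is to compare $\Sl_{G,P}(M)$ with the slash of the restriction. By Lemma \ref{Lem2.9} (with $R=P\le H$), there is a $P$-slash functor $\Sl_{H,P}$ over $H$ with
\begin{equation*}
\Res^{\N_G(P)}_{\N_H(P)}\Sl_{G,P}(M)=\Sl_{H,P}(\Res^G_H M).
\end{equation*}
Everything is then read off from this single identity.

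\emph{Step 1: trivial quotient.} By Definition \ref{Def2.8}, $\Sl_{G,P}(M)$ has trivial top, so it has a surjection onto $k$. Restricting to $\N_H(P)$, the module $\Sl_{H,P}(\Res^G_H M)$ also surjects onto the trivial module. Decompose $\Res^G_H M=\bigoplus_i X_i$ into indecomposables. Since $\Sl_{H,P}$ is additive (it is $k$-linear), some summand $X:=X_i$ has $\Sl_{H,P}(X)$ surjecting onto $k$. By Remark 3 after Definition \ref{Def2.8}, $\Sl_{H,P}(X)\neq0$ forces $P\le_H \vx(X)$. On the other hand, $X$ is a summand of $\Res^G_H M$, so it is relatively $P$-projective; hence $P$ is a vertex of $X$.

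\emph{Step 2: the source is a conjugate of $V$.} $X$ is a summand of $\Res^G_H\Ind^G_P V$, so by Mackey, $X\mid \Ind^H_{H\cap {}^gP}\Res\,{}^gV$ for some $g$. Since $X$ has vertex $P$, after conjugating within $H$ we may take ${}^gP=P$, that is, $g\in\N_G(P)$. Then a source of $X$ is a summand of $\Res^{{}^gP}_{P}\,{}^gV={}^gV$, which is indecomposable, so the source is ${}^tV$ with $t=g\in\N_G(P)$.

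\emph{Step 3: fusion stability and the Scott property.} $(P,{}^tV)$ is fusion-stable for $H$: any $\phi_1,\phi_2\in\Hom_H(Q,P)$ give $\Res_{\phi_i}{}^tV=\Res_{c_t\circ\phi_i}V$ with $c_t\circ\phi_i\in\Hom_G(Q,P)$, and the $G$-condition applies. The objects $\Res^G_H\mathcal{C}_G(P,V)$ are endo-$p$-permutation, so we may enlarge to a $\mathcal{C}_H(P,{}^tV)$ containing $\Res^G_H M$; I will check this carefully, since it is needed for $\Sl_{H,P}$ to be defined on $X$. Now $\Sl_{H,P}(X)$ is a projective indecomposable $k[\N_H(P)/P]$-module (the bijection lemma), so it has simple top. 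It surjects onto $k$, so its top is $k$. Hence $\widetilde M:=X$ is the endopermutation Scott module with respect to $(P,{}^tV)$ and $\Sl_{H,P}$, and $\widetilde M\mid\Res^G_H M$.

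\emph{Main obstacle.} The delicate point is Step 2: justifying that the vertex can be conjugated to exactly $P$ by an element of $\N_G(P)$ (rather than of $H$ composed with something else), while keeping track that the source is then exactly ${}^tV$. This is a Green-correspondence/Mackey bookkeeping argument, and I would handle it by applying Mackey to $\Res_P$ of both sides.
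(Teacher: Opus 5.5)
Your proposal is correct and follows essentially the paper's argument: Lemma \ref{Lem2.9}, then a summand $X\mid\Res^G_H M$ whose slash sees the trivial module, then the sandwich $P\le_H\vx(X)\le_G P$ giving a source ${}^tV$ with $t\in\N_G(P)$, then the PIM property. The differences are that you track the trivial top instead of the trivial socle, and where you plan to enlarge to a single category, the paper chooses $\mathcal{C}_H(P,{}^tV)$ separately and twists by a linear character via Lemma \ref{Lem2.5}(2) so that the two slash functors agree on $X$.

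One slip to fix: a summand of $\Res^G_H M$ need not be relatively $P$-projective as a $kH$-module, since Mackey only gives $\vx(X)\le_G P$. This weaker fact, together with $P\le_H\vx(X)$, still forces $P$ to be a vertex of $X$ by comparing orders.
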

\begin{proof}
Let $\mathcal{C}_2$ be a full subcategory of $\mod(kH)$ consisting of $\Res_H^G(M)$ and $\Decom(\Res_H^G(M))$. $\mathcal{C}_2$ is a Frobenius-friendly category and there exists a Slash functor $\Sl_{H, P}: \mathcal{C}_2\rightarrow \mod(k\N_H(P))$ such that $\Sl_{H, P}(\Res_H^G(M))= \Res_{\N_H(P)}^{\N_G(P)}\Sl_{G, P}(M)$ by Lemma \ref{Lem2.9}. As a $k[\N_G(P)/ P]$-module, $\Sl_{G, P}(M)$ is an indecomposable projective module satisfying
\begin{equation*}
\top(\Sl_{G, P}(M))\cong \soc(\Sl_{G, P}(M))\cong {_{\N_G(P)/ P}k}.
\end{equation*}
So there exists an indecomposable direct summand $X\mid \Res_H^G(M)$ such that 
\begin{equation*}
\Hom_{k\N_H(P)}(k, \Sl_{H, P}(X))\neq 0.
\end{equation*}
Thus $P\leq_H \vx(X)\leq_G P$. So there exists $t\in \N_G(P)$ such that $X$ has a source pair $(P, {^tV})$. We choose a $\mathcal{C}_H(P, {^tV})$. Let $\Sl_{H, P}': \mathcal{C}_H(P, {^tV})\rightarrow \mod(k\N_H(P))$ be a $P$-slash functor. By Lemma \ref{Lem2.5} (2), after twisting by a linear character we may assume $\Sl_{H, P}'(X)= \Sl_{H, P}(X)$. It is known that $\Sl_{H, P}'(X)$ is an indecomposable projective $k[\N_H(P)/ P]$-module. Since $\Sl_{H, P}'(X)$ contains a trivial submodule $_{\N_H(P)/ P}k$, we have $\top(\Sl_{H, P}'(X))= _{\N_H(P)/ P}k$. Thus $X$ is an endopermutation Scott module with respect to the source pair $(P, {^tV})$ and slash functor $\Sl_{H, P}'$.
\end{proof}

\begin{Lemma}\label{lem2}
Let $G$ be a finite group and let $P$ be a $p$-subgroup. Suppose $Q$ is a $p$-subgroup of $P$. Let $x\in G$. Let $(P, S)$ be a fusion-stable endopermutation source pair of $G$ and we choose a $\mathcal{C}_G(P, S)$. Let $\Sl_{G, Q}: \mathcal{C}_G(P, S)\rightarrow \mod(k\N_G(Q))$ be a $Q$-slash functor. Set $(^x\Sl_{G, Q})(Y)= {^x\Sl_{G, Q}(Y)}$ for any object $Y$ of $\mathcal{C}_G(P, S)$. Then $^x\Sl_{G, Q}: \mathcal{C}_G(P, S)\rightarrow \mod(k\N_G(^xQ))$ is a $^xQ$-slash functor. 
\end{Lemma}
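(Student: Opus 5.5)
We verify directly from the definition of a $P$-slash functor that ${^x\Sl_{G, Q}}$ satisfies the required data and axioms, with $Q$ replaced by ${^xQ}$ throughout. For an object $Y$ of $\mathcal{C}_G(P, S)$, the module ${^x\Sl_{G, Q}(Y)}$ is the conjugate $k\N_G({^xQ})$-module. Its underlying space is $\Sl_{G, Q}(Y)$, and $g\in \N_G({^xQ})$ acts as $x^{-1}gx\in \N_G(Q)$ does. Note that $\N_G({^xQ})= {^x\N_G(Q)}$, so this is well defined. Since $Q$ acts trivially on $\Sl_{G, Q}(Y)$, the group ${^xQ}$ acts trivially on the conjugate. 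This is the kernel condition.

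For the maps, fix objects $V, W$ and take $f\in \Hom_{k{^xQ}}(V, W)$. Since $V, W$ are $kG$-modules, the map $f':=x^{-1}fx: v\mapsto x^{-1}f(xv)$ lies in $\Hom_{kQ}(V, W)$, and $f\mapsto f'$ is a $k$-linear bijection. We define
\[
({^x\Sl_{G, Q}})^{V, W}(f):=\Sl_{G, Q}^{V, W}(f'),
\]
regarded as a $k$-linear map between the same underlying spaces ${^x\Sl_{G, Q}(V)}\to {^x\Sl_{G, Q}(W)}$. Conjugation by $x^{-1}$ preserves identities and composition. Hence axioms (1) and (2) follow at once from those of $\Sl_{G, Q}$.

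The main step is axiom (3). Write $A=\Hom_k(V, W)$, a $k(G\times G)$-module. The map $\phi\mapsto (x^{-1},x^{-1})\phi$ is a $k$-linear automorphism of $A$. It carries the fixed points $A^{\Delta({^xQ})}$ onto $A^{\Delta Q}$ and carries relative traces from proper subgroups of $\Delta({^xQ})$ onto relative traces from the conjugate subgroups of $\Delta Q$. Therefore it induces an isomorphism
\[
\Br_{\Delta({^xQ})}(A)\cong {^{(x,x)}\Br_{\Delta Q}(A)}
\]
of $k(\C_G({^xQ})\times \C_G({^xQ}))\Delta\N_G({^xQ})$-modules. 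We compose this with the conjugate, by $(x,x)$, of the isomorphism $\Br_{\Delta Q}(A)\cong \Hom_k(\Sl_{G,Q}(V),\Sl_{G,Q}(W))$ from axiom (3) for $\Sl_{G, Q}$. The result is an isomorphism onto $\Hom_k({^x\Sl_{G,Q}(V)},{^x\Sl_{G,Q}(W)})$. This works because conjugation of modules commutes with forming $\Hom_k$, and $\C_G({^xQ})={^x\C_G(Q)}$.

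It remains to check that $({^x\Sl_{G, Q}})^{V, W}$ factors through this isomorphism. Restricted to $\Hom_{k{^xQ}}(V,W)=A^{\Delta({^xQ})}$, the composite sends $f$ first to $f'$, then through $\Br_{\Delta Q}$ and the given isomorphism to $\Sl_{G,Q}^{V,W}(f')$. This is exactly our definition. The main obstacle is this bookkeeping: keeping track of which side the conjugation acts on and confirming that the twisted actions match under the identification of underlying spaces. All the remaining steps are formal transport of structure along the automorphism given by conjugation by $x$.
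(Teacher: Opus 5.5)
Your proposal is correct and follows essentially the same route as the paper: both transport the structure of $\Sl_{G,Q}$ along conjugation by $x$. Your map $f\mapsto x^{-1}fx$ is exactly the identification $\Hom_{k{}^xQ}(V,W)\cong\Hom_{k{}^xQ}({}^xV,{}^xW)$ that the paper derives from the $kG$-isomorphism $V\cong{}^xV$, and your Brauer-quotient isomorphism is the bottom row of the paper's second diagram; you simply write out the morphism maps, axioms (1)--(2), and the equivariance checks that the paper leaves implicit in its commutative diagrams.
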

\begin{proof}
Let $V, W$ be any two objects of $\mathcal{C}_G(P, S)$. By the definition of $Q$-slash functor, there exists a commutative diagram of $k(C_G(^xQ)\times C_G(^xQ))\Delta\N_G(^xQ)$-modules:

\begin{center}
\begin{tikzcd}
{\Hom_{k {^xQ}}({^xV}, {^xW})} \arrow[rd] \arrow[rr] &                                                             & {\Hom_k({^x\Sl_{G, Q}(V)}, {^x\Sl_{G, Q}(W)})} \\
                                                    & {\Br_{\Delta(^xQ)}(\Hom_k({^xV}, {^xW}))} \arrow[ru, "\cong"] &                                             .
\end{tikzcd}
\end{center}

Since $V\cong {^xV}$ as $kG$-modules, we get the following commutative diagram of $k\N_{G\times G}(\Delta(^xQ))$-modules:

\begin{center}
\begin{tikzcd}
{\Hom_{k {^xQ}}(V, W)} \arrow[r, "\cong"] \arrow[d]  & {\Hom_{k{^xQ}}({^xV}, {^xW})} \arrow[d]  \\
{\Br_{\Delta(^xQ)}(\Hom_k(V, W))} \arrow[r, "\cong"] & {\Br_{\Delta(^xQ)}(\Hom_k({^xV}, {^xW}))}.
\end{tikzcd}
\end{center}

We define $\Sl_{G, {^xQ}}(V)= {^x\Sl_{G, Q}(V)}$. Then we get the following commutative diagram of $k(C_G(^xQ)\times C_G(^xQ))\Delta\N_G(^xQ)$-modules:

\begin{center}
\begin{tikzcd}
{\Hom_{k {^xQ}}(V, W)} \arrow[rd] \arrow[rr] &                                                             & {\Hom_k(\Sl_{G, {^xQ}}(V), \Sl_{G, {^xQ}}(W))} \\
                                                    & {\Br_{\Delta(^xQ)}(\Hom_k(V, W))} \arrow[ru, "\cong"] &.                                      
\end{tikzcd}
\end{center}

So $\Sl_{G, {^xQ}}$ is a $^xQ$-slash functor. 
\end{proof}

\begin{proof}[Proof of Theorem \ref{Th3.3}]
First, the existence of $t(Q)$ follows from Lemma \ref{lem1}. Now we assume for any $Q\in \mathcal{S}$, $M_Q$ is slash indecomposable.

Claim: For any subgroup $Q\leq P$ and any $Q$-slash functor $\Sl_{G, Q}: \mathcal{C}_G(P, V)\rightarrow \mod(k\N_G(Q))$, $\Res_{Q\C_G(Q)}^{\N_G(Q)}\Sl_{G, Q}(M)$ is indecomposable.

Proof of claim: First we assume $Q= 1$. Then by Lemma \ref{Lem2.5} (2), $\Sl_{G, Q}$ is the identity functor. Therefore, $\Res_{Q\C_G(Q)}^{\N_G(Q)}\Sl_{G, Q}(M)= M$, which is indecomposable. Now we assume $Q> 1$ and we use induction on $|P|/ |Q|$. 

Case $P= Q$: By \cite[Lemma 4.8]{MR3789018}.

Case $P> Q$: First we assume $Q\in\mathcal{S}$. There exists a $k\N_G(Q)$-module $N_Q$ such that $\Res_{\N_G(Q)}^G(M)\cong M_Q\oplus N_Q$. Let $\mathcal{C}$ be a full subcategory of $\mod(k\N_G(Q))$ consisting of $\Res_{\N_G(Q)}^G(M), M_Q, N_Q$ and $\Decom(M_Q), \Decom(N_Q)$. Then $\mathcal{C}$ is a Frobenius-friendly category. There exists a $Q$-slash functor $\widetilde{\Sl}_{N_G(Q), Q}: \mathcal{C}\rightarrow \mod(k\N_G(Q))$ such that $\widetilde{\Sl}_{\N_G(Q), Q}(\Res_{\N_G(Q)}^G(M))= \Sl_{G, Q}(M)$. So we get
\begin{equation*}
\Res_{Q\C_G(Q)}^{\N_G(Q)}(\Sl_{G, Q}(M))\cong \Res_{Q\C_G(Q)}^{\N_G(Q)}(\widetilde{\Sl}_{\N_G(Q), Q}(M_Q))\oplus \Res_{Q\C_G(Q)}^{\N_G(Q)}(\widetilde{\Sl}_{\N_G(Q), Q}(N_Q)).
\end{equation*}
Since $M_Q$ is slash indecomposable, we only need to prove $\widetilde{\Sl}_{\N_G(Q), Q}(N_Q)= 0$. 

If $\widetilde{\Sl}_{\N_G(Q), Q}(N_Q)\neq 0$, it has an indecomposable summand $L$. Let $R$ be a vertex of $L$. By Lemma \ref{Lem2.5}, there exists an indecomposable summand $\widetilde{L}\mid N_Q$ with vertex $B$ such that $B\geq R\geq Q$. If $R= Q$, then $P= Q$ by Lemma \ref{Lem2.5} (4), which contradicts our assumption $P> Q$. Since $\widetilde{L}\mid \Res_{\N_G(Q)}^G\Ind_P^G(V)$, by Mackey's decomposition formula, we get $\widetilde{L}\mid \Ind_{\N_{^tP}(Q)}^{\N_G(Q)}\Res_{\N_{^tP}(Q)}^{^tP}(^tV)$ for some $t\in G$. So $Q\leq B\leq_{\N_G(Q)} \N_{^tP}(Q)$. By \cite[Lemma 3.2]{MR3565442}, we have $\N_{^tP}(Q)\leq_{\N_G(Q)} P$. Thus we may assume $Q\leq R\leq B\leq P$.

By Lemma \ref{Lem2.6}, there exists a Frobenius-friendly category $\mathcal{D}$ of $k\N_G(Q)$-modules that contains the essential image of $\Sl_{G, Q}$. Let $\Sl_{N_G(Q), R}': \mathcal{D}\rightarrow \mod(k\N_G(Q, R))$ be an $R$-slash functor. By Lemma \ref{Lem2.6}, we know $\Sl_{N_G(Q), R}'\circ \Sl_{G, Q}: \mathcal{C}_G(P, V)\rightarrow \mod(k\N_G(Q, R))$ is an $R$-slash functor. Since $\Res_{\N_G(Q, R)}^{\N_G(R)}\circ \Sl_{G, R}$ is also an $R$-slash functor, we may twist $\Sl_{\N_G(Q), R}'$ by a linear character $\chi: \N_G(Q, R)/ R\C_G(R)\rightarrow k^\times$ such that 
\begin{equation*}
\Sl_{\N_G(Q), R}'\circ \Sl_{G, Q}(M)\cong \Res_{\N_G(Q, R)}^{\N_G(R)}(\Sl_{G, R}(M)).
\end{equation*}
By the induction hypothesis, $\Sl_{\N_G(Q), R}'\circ \Sl_{G, Q}(M)$ is indecomposable. Now we let $\mathcal{D}'$ be a full subcategory of $\mod(k\N_G(Q))$ consisting of $\Sl_{G, Q}(M), \widetilde{\Sl}_{\N_G(Q), Q}(M_Q), \widetilde{\Sl}_{\N_G(Q), Q}(N_Q)$ and $\Decom(\widetilde{\Sl}_{\N_G(Q), Q}(M_Q)), \Decom(\widetilde{\Sl}_{\N_G(Q), Q}(N_Q))$. We require $L\in \Decom(\widetilde{\Sl}_{\N_G(Q), Q}(N_Q))$. There exists an $R$-slash functor $\Sl_{\N_G(Q), R}'': \mathcal{D}'\rightarrow \mod(k\N_G(Q, R))$ such that 
\begin{equation*}
\Sl_{\N_G(Q), R}'\circ \Sl_{G, Q}(M)= \Sl_{\N_G(Q), R}''\circ \Sl_{G, Q}(M).
\end{equation*}

Since $\Sl_{\N_G(Q), R}''(L)\neq 0$, we get $\Sl_{\N_G(Q), R}''(\widetilde{\Sl}_{\N_G(Q), Q}(N_Q))\neq 0$. For any $P$-slash functor $\Sl_P: \mathcal{D}'\rightarrow \mod(k\N_G(Q, P))$, $\Sl_P\circ\widetilde{\Sl}_{\N_G(Q), Q}(M_Q)\neq 0$ since $P$ is a vertex of $M_Q$. Thus there exists an indecomposable summand $Y\mid \widetilde{\Sl}_{\N_G(Q), Q}(M_Q)$ such that $\Sl_P(Y)\neq 0$. Let $T$ be a vertex of $Y$. Then by Lemma \ref{Lem2.5}, we have 
\begin{equation*}
P\leq_{\N_G(Q)} T\leq_{\N_G(Q)} P,
\end{equation*}
where the first inclusion comes from $\Sl_P(Y)\neq 0$. So we may assume $T= P$. In order to prove $\Sl_{\N_G(Q), R}''(\widetilde{\Sl}_{\N_G(Q), Q}(M_Q))\neq 0$, we only need to prove $R\leq_{\N_G(Q)} P$, but we already have $R\leq P$.

Combining  $\Sl_{\N_G(Q), R}''(\widetilde{\Sl}_{\N_G(Q), Q}(N_Q))\neq 0$ and $\Sl_{\N_G(Q), R}''(\widetilde{\Sl}_{\N_G(Q), Q}(M_Q))\neq 0$, we know $\Sl_{\N_G(Q), R}''\circ \Sl_{G, Q}(M)$ is a direct sum of two nonzero modules, which contradicts its indecomposability.

Then we assume $Q$ is just a subgroup of $P$. There exists $x\in G$ such that $^xQ\leq P$ is fully normalized. By Lemma \ref{lem2}, $^x\Sl_{G, Q}: \mathcal{C}_G(P, V)\rightarrow \mod(k\N_G(^xQ))$ is a $^xQ$-slash functor. So $\Res_{^xQ\C_G(^xQ)}^{\N_G(^xQ)}{^x\Sl_{G, Q}}(M)$ is indecomposable. Thus $\Res_{Q\C_G(Q)}^{\N_G(Q)}\Sl_{G, Q}(M)$ is indecomposable.

By the definition of slash indecomposability we also need to consider the case $Q\nleq P$, but in such case, we only need to use Lemma \ref{lem2} and the Remark 3 after Definition \ref{Def2.8} directly.

\end{proof}

\section{Examples}
A finite group is called a \emph{Dedekind group} if all subgroups are normal. A nonabelian Dedekind group is called a \emph{Hamiltonian group}. We will use the following fact: A Dedekind $p$-group is abelian when $p$ is odd, which is a corollary of the following theorem \cite[p.325, Theorem 7.12]{MR4928289}.

\begin{Theorem}[Dedekind]
Assume $G$ is nonabelian and every subgroup of $G$ is normal in $G$. Then $G= Q\times A\times B$, where $Q$ is a quaternion group of order $8$, $A$ is an abelian group of odd order and $B$ is an abelian group of exponent $1$ or $2$. Conversely, if $G$ has the given structure, then every subgroup of $G$ is normal in $G$.
\end{Theorem}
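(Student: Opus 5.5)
This is Dedekind's classical theorem, and I would prove it directly by elementary group theory, independent of the slash-functor machinery above. The main tool is the following observation. In a group $G$ all of whose subgroups are normal, for any $x,y\in G$ both $\langle x\rangle$ and $\langle y\rangle$ are normal. Hence the commutator $[x,y]$ lies in $\langle x\rangle\cap\langle y\rangle$, and so $[x,y]$ commutes with both $x$ and $y$. In particular every two-generated subgroup $\langle x,y\rangle$ has nilpotency class at most $2$. This gives the identities $[x,y]^n=[x^n,y]$ and $(xy)^n=x^ny^n[y,x]^{\binom n2}$.

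\textbf{Step 1: a quaternion subgroup.} Since $G$ is nonabelian, choose noncommuting $x,y$ with $|x|+|y|$ minimal. Write $c=[x,y]\neq 1$, so $c=x^{i}=y^{j}$.
\begin{itemize}
\item Minimality and the class-$2$ identities show that $c$ has prime order $p$. Otherwise some proper power $x^a$ would still fail to commute with $y$, contradicting minimality.
\item Replacing $x$ by $xy^{m}$ for suitable $m$, and using minimality again, one pins down $p=2$, $|x|=|y|=4$ and $x^2=y^2=c$.
\item Then $Q:=\langle x,y\rangle\cong Q_8$.
\end{itemize}
I expect this minimal-counterexample bookkeeping to be the main obstacle. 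The delicate part is ruling out odd $p$ and higher $2$-power orders. My first approach is the standard trick: if $|c|=p$ with $p$ odd, then $(xy^{-k})$ commutes with $y$ for a suitable $k$, contradicting $c\neq 1$.

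\textbf{Step 2: the decomposition.} Every Sylow subgroup is normal, so $G$ is nilpotent and $G=G_2\times G_{2'}$.
\begin{itemize}
\item By Step 1, any nonabelian Dedekind group contains a quaternion subgroup, which is a $2$-group. So $A:=G_{2'}$ is abelian, and $A$ centralizes $Q\le G_2$.
\item Next I show $G_2=Q\,C_{G_2}(Q)$. For $g\in G_2$, the element $g$ normalizes $\langle x\rangle$ and $\langle y\rangle$, so it acts on $Q$ by an automorphism fixing both $\langle x\rangle$ and $\langle y\rangle$. Such an automorphism is inner, so $g$ can be corrected by an element of $Q$ to centralize $Q$.
\item Let $z\in C_{G_2}(Q)$. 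If $z$ has order $4$, then $\langle xz\rangle$ is not normal, since $y$ conjugates $xz$ to $x^{-1}z\notin\langle xz\rangle$. If $z$ has order $2$ but $z^2$-type interactions with $c$ occur, a similar check applies. From this one shows $C_{G_2}(Q)=Z(Q)\times B$ with $B$ elementary abelian.
\item Hence $G=Q\times A\times B$.
\end{itemize}

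\textbf{Step 3: the converse.} It suffices to show every cyclic subgroup $\langle g\rangle$ is normal. Write $g=qab$ with $q\in Q$, $a\in A$, $b\in B$. Conjugation only changes $q$, either fixing it or sending it to $q^{-1}$, the latter only when $|q|=4$. So I need $m$ with $g^m=q^{-1}ab$. This requires
\begin{itemize}
\item $m\equiv -1 \pmod 4$,
\item $m\equiv 1\pmod{|a|}$,
\item $m$ odd, which is forced by the first condition and handles $b$, since $b^2=1$.
\end{itemize}
Such an $m$ exists by the Chinese Remainder Theorem because $|a|$ is odd.
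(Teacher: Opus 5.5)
The paper gives no proof of this theorem; it quotes it from a textbook, so your proposal has to stand on its own. Its overall plan is the standard one: class-$2$ identities, a minimal non-commuting pair yielding $Q_8$, nilpotency, $G_2=Q\,C_{G_2}(Q)$, and CRT for the converse. Steps 2 and 3 are essentially sound. The genuine gap is in Step 1, which is the heart of the theorem, and the one concrete mechanism you offer cannot work. Since $c\in\langle y\rangle$ and $[y^{-k},y]=1$, you get $[xy^{-k},y]=[x,y]^{y^{-k}}[y^{-k},y]=c$ for \emph{every} $k$. So $xy^{-k}$ commutes with $y$ if and only if $x$ does, no choice of $k$ gives a contradiction, and your claim that one ``pins down $p=2$, $|x|=|y|=4$'' is left unsupported.

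The missing idea is that the replacement element must have \emph{smaller order} while still failing to commute with the other generator. First get more out of minimality. If $|x|=p^as$ with $s>1$ and $p\nmid s$, then $[x^s,y]=c^s\neq1$ with $|x^s|<|x|$, a contradiction. So $x$ and $y$ are $p$-elements, say $|x|=p^a\ge|y|=p^b$. Moreover $b\ge2$, since otherwise $\langle y\rangle=\langle c\rangle\le\langle x\rangle$. Because $\langle c\rangle$ is the unique subgroup of order $p$ in both $\langle x\rangle$ and $\langle y\rangle$, there is $t$ with $x^{tp^{b-1}}=y^{p^{b-1}}$. Your power formula then gives
\[
(x^ty^{-1})^{p^{b-1}}=x^{tp^{b-1}}y^{-p^{b-1}}[y^{-1},x^t]^{\binom{p^{b-1}}{2}}=c^{\,t\binom{p^{b-1}}{2}}.
\]
If $p$ is odd, or $p=2$ and $b\ge3$, then $p\mid\binom{p^{b-1}}{2}$. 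In that case $w=x^ty^{-1}$ has order smaller than $|y|$, while $[x,w]=[x,y^{-1}]\neq1$, contradicting minimality. Hence $p=2$, $|y|=4$ and $y^2=c$. The same computation with $t=2^{a-2}$ gives $(x^{2^{a-2}}y^{-1})^2=c^{2^{a-2}}$, which is trivial when $a\ge3$, so $a=2$. Then $x^2=y^2=c$ and $y^{-1}xy=xc=x^{-1}$, so $\langle x,y\rangle\cong Q_8$. In Step 2 the sentence about order-$2$ elements is unnecessary. Once $C_{G_2}(Q)$ has no element of order $4$, it has exponent at most $2$, hence is elementary abelian, and $Z(Q)$ has a complement $B$ in it.
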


In this section, we give two examples. In particular, there is an infinite family of finite simple groups such that for any group $G$ in this family, there exists a non-Dedekind Sylow subgroup $P$ satisfying the group-theoretic hypotheses in Theorem \ref{Th3.3}. 

\begin{Example}
Let $q$ be an odd prime power. Assume $q\equiv 7~\text{or}~9(\text{mod}~16)$. Let $G= \PSL(2, q)~\text{or}~\SL(2, q)$, and let $P\in \Syl_2(G)$, then 

(1) $\mathcal{F}_P(G)$ is saturated; 

(2) If $Q\leq P$ is a fully normalized subgroup, then $Q\lhd P$;

(3) $P$ is not Dedekind. 
\end{Example}
\begin{proof}
By \cite[p.8, Theorem 2.3]{MR2848834}, $\mathcal{F}_P(G)$ is saturated.

First, we assume $G= \PSL(2, q)$. Since $q\equiv 7~\text{or}~9(\text{mod}~16)$, we get $v_2(q^2-1)= 4$. Note that $|G|= q(q^2-1)/ 2$. So $|P|= 8$. By \cite[p.223, Theorem 8.27]{MR4928289}, we get $P\cong D_8$. Let $Q$ be a subgroup of $P$ such that $Q\ntriangleleft P$. By direct calculation, we know $|Q|= 2$ and $|\N_P(Q)|= 4$. By \cite[p.297, paragraph 4]{MR3418059}, we know all involutions in $G$ are conjugate with each other. So there exists $g\in G$ such that $gQg^{-1}= Z(P)$. But $|\N_P(Q)|= 4< 8= |\N_P(Z(P))|= |P|$. So $Q$ is not fully normalized.

Now we assume $G= \SL(2, q)$. $v_2(|G|)= v_2(q(q^2- 1))= 4$. So $|P|= 16$. Let $\pi: G\rightarrow \PSL(2, q)$ be the canonical epimorphism. Let $x$ be an involution in $G$, then $x^2= I$. Thus $x$ is diagonalizable and the set of eigenvalues of $x$ can only be $\{1\}$ or $\{-1\}$ since $\det(x)= 1$. So $x= -I$ and $\ker(\pi)= \langle -I\rangle$. Therefore, $\pi(P)\cong D_8$. $P$ is a group of order 16 and has a unique involution. The quotient of $P$ by this involution is $D_8$. By direct calculation, we know 
\begin{equation*}
P= \langle a, b\mid a^8= 1, b^2= a^4, bab^{-1}= a^{-1}\rangle\cong Q_{16}. 
\end{equation*}

Let $Q$ be a subgroup of $P$ such that $Q\ntriangleleft P$. By direct calculation, $|Q|= 4$ and $|\N_P(Q)|= 8$. Since $Q$ contains the unique involution $-I$, we get $|\pi(Q)|= 2$ and thus there exists $g\in G$ such that $\overline{g}\pi(Q)\overline{g}^{-1}= Z(\pi(P))= \pi(\langle a^2\rangle)$. So $gQg^{-1}= \langle a^2\rangle$. Now we have $|\N_P(Q)|= 8< 16= |\N_P(\langle a^2\rangle)|$. So $Q$ is not fully normalized.

$D_8, Q_{16}$ are non-abelian and non-Dedekind follows from direct calculation.

In order to prove there are infinitely many such groups, it suffices to prove there are infinitely many odd primes $q$ satisfying $q\equiv 7~\text{or}~9(\text{mod}~16)$, which follows from the well-known Dirichlet's theorem.
\end{proof}

\begin{Lemma}\label{lemnor}
Let $G$ be a finite group and let $P\in \Syl_p(G)$ (we do not require $p$ is odd). TFAE: (1) For any fully normalized subgroup $Q$ in $\mathcal{F}_P(G)$, we have $Q\lhd P$; (2) For any $Q\leq P$, we have $p\nmid [G: \N_G(Q)]$.
\end{Lemma}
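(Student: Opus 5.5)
We prove both implications directly from the definition of a fully normalized subgroup. Since $P\in\Syl_p(G)$, for $Q\leq P$ the subgroup $Q$ is fully normalized in $\mathcal{F}_P(G)$ exactly when $\N_P(Q)\in\Syl_p(\N_G(Q))$. This is the standard characterization. Any Sylow $p$-subgroup of $\N_G(Q)$ is contained in some $G$-conjugate of $P$, so after conjugating by $g\in\N_G(Q)$ we may assume it lies in $P$ and hence in $\N_P(Q)$. If $|\N_P(Q)|$ is maximal among the $G$-conjugates of $Q$ inside $P$, this forces $\N_P(Q)$ to be Sylow in $\N_G(Q)$. Conversely, if $\N_P(Q)$ is Sylow, then for ${}^gQ\leq P$ we have $|\N_P({}^gQ)|\leq |\N_G({}^gQ)|_p=|\N_G(Q)|_p=|\N_P(Q)|$. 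We record this as the first step. It can be cited from \cite{MR2848834} or reproved in two lines as above.

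\textbf{(2)$\Rightarrow$(1).} Let $Q\leq P$ be fully normalized. By (2), $|\N_G(Q)|_p=|G|_p=|P|$. By the characterization, $|\N_P(Q)|=|\N_G(Q)|_p=|P|$. Hence $\N_P(Q)=P$, that is, $Q\lhd P$.

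\textbf{(1)$\Rightarrow$(2).} Let $Q\leq P$ be arbitrary. Choose a $G$-conjugate $Q'={}^gQ\leq P$ with $|\N_P(Q')|$ maximal; such a $Q'$ is fully normalized, and it exists since the set of $G$-conjugates of $Q$ contained in $P$ is nonempty and finite. By (1), $Q'\lhd P$, so $P\leq \N_G(Q')$. Therefore $p\nmid[G:\N_G(Q')]$. Since $\N_G(Q')={}^g\N_G(Q)$ has the same index as $\N_G(Q)$, we get $p\nmid[G:\N_G(Q)]$.

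The only substantive step is the Sylow characterization of full normalization, and it holds for all primes $p$, including $p=2$. Note that the saturation of $\mathcal{F}_P(G)$ for $P$ Sylow is not even needed, only the conjugation argument inside $\N_G(Q)$. I expect no real obstacle. The one point requiring care is the conjugation step: we must conjugate the Sylow subgroup of $\N_G(Q)$ by an element $n\in\N_G(Q)$, so that $Q$ is preserved, and land in a $G$-conjugate of $P$. This is why the argument compares against conjugates ${}^gQ$ rather than against $Q$ itself.
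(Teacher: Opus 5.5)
Your proof is correct and essentially the paper's. (1)$\Rightarrow$(2) is identical, and for (2)$\Rightarrow$(1) the paper simply inlines the Sylow characterization you cite: it takes $S\in\Syl_p(G)$ with $Q\le S\le \N_G(Q)$ and ${}^gS=P$, so that $|\N_P({}^gQ)|=|P|$, and full normalization then forces $\N_P(Q)=P$.

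One slip in your two-line reproof of the characterization: a Sylow $p$-subgroup $T$ of $\N_G(Q)$ cannot in general be moved into $P$ by an element of $\N_G(Q)$, since that is possible only when $\N_P(Q)$ is already Sylow, which is what you are trying to prove. The correct move is to take $T\le {}^xP$ with $x\in G$ and note ${}^{x^{-1}}Q\le {}^{x^{-1}}T\le \N_P({}^{x^{-1}}Q)$, using $Q\le T$ because $Q\lhd \N_G(Q)$. Maximality then gives $|\N_P(Q)|\ge |T|$, which is exactly the conjugation the paper performs.
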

\begin{proof}
$(1)\Rightarrow (2):$ Let $Q\leq P$ and let $g\in G$ be such that $R:= {^gQ}$ is a fully normalized subgroup of $P$. By our assumption, $R\lhd P$ and thus $Q= {^{g^{-1}}R}\lhd {^{g^{-1}}P}$. So $p\nmid [G: \N_G(Q)]$.

$(2)\Rightarrow (1):$ Suppose $Q\leq P$ is fully normalized. There exists an $S\in \Syl_p(G)$ such that $Q\leq S\leq \N_G(Q)$. There exists $g\in G$ such that $^gS= P$. So 
\begin{equation*}
|\N_P({gQg^{-1}})|= |g\N_{g^{-1}Pg}(Q)g^{-1}|= |\N_S(Q)|= |S|= |P|.
\end{equation*}
Since $|\N_P({gQg^{-1}})|\leq |\N_P(Q)|$, we get $Q\lhd P$.
\end{proof}

Now we give an example considering Sylow $3$-subgroup. 

\begin{Example}
Let $G= {^2F_4(2)'}$, the Tits group. Let $P\in \Syl_3(G)$.  Then

(1) $\mathcal{F}_P(G)$ is saturated; 

(2) If $Q\leq P$ is a fully normalized subgroup, then $Q\lhd P$;

(3) $P$ is not Dedekind.
\end{Example}
\begin{proof}
By ATLAS \cite{MR827219}, $|G|= 2^{11}\cdot3^3\cdot5^2\cdot13$ and $\PSL(3, 3)$ is a subgroup containing a Sylow $3$-subgroup of order $27$. Let $P$ be the subgroup of $\PSL(3, 3)$ consisting of upper unitriangular matrices. $P$ is not abelian. Let $Q\leq P$. We only need to show $3\nmid[G: \N_G(Q)]$. If $|Q|\in \{1, 27\}$, then we are done. If $|Q|= 9$, then $[P: Q]= 3$, which implies $Q\lhd P$. So $P\leq \N_G(Q)$ and $3\nmid [G: \N_G(Q)]$. 

Assume $|Q|= 3$. Let $x\in Q\backslash\{1\}$. Again, by ATLAS, there exists a unique conjugacy class containing elements of order $3$ and the size of the centralizer is $108= 2^2\cdot 3^3$. So $3\nmid [G: \N_G(Q)]$.
\end{proof}

\section*{Acknowledgement}

The author thanks Zhicheng Feng for his careful reading of the manuscript and his valuable comments.

\bibliography{reference}
\bibliographystyle{abbrv}

\end{document}